\newtheorem{theorem}{Theorem}
\newtheorem{lemma}{Lemma}
\newtheorem{corollary}{Corollary}
\newtheorem{remark}{Remark}
\newtheorem{assumption}{Assumption}
\begin{document}
%
\title{Multipliers for nonlinearities with monotone bounds}
%
%
%

\author{William P. Heath,~\IEEEmembership{Member,~IEEE,}
        Joaquin Carrasco,~\IEEEmembership{Member,~IEEE,}
        and~Dmitry A. Altshuller,
\thanks{W. P. Heath and J. Carrasco are with the Control Systems Centre, Department
of Electrical and Electronic Engineering, University of Manchester, UK. e-mail: william.heath@manchester.ac.uk, joaquin.carrasco@manchester.ac.uk.
The late D. A. Altshuller was with Dassault Systems, USA.}
}

\maketitle

\begin{abstract}
 We consider Lurye (sometimes written Lur'e) systems whose nonlinear operator is characterised by a possibly multivalued nonlinearity that  is bounded above and below by monotone functions. Stability can be established using a sub-class of the Zames-Falb multipliers. The result generalises similar approaches in the literature. 
Appropriate multipliers can be found using convex searches. Because the multipliers can be used for multivalued nonlinearities they can be applied after loop transformation. We illustrate the power of the new mutlipliers with two examples, one in continuous time and one in discrete time: in the first the approach is shown to outperform available stability tests in the literature; in the second  we focus on the special case for asymmetric saturation with important consequences for systems with non-zero steady state exogenous signals. 
\end{abstract}

\begin{IEEEkeywords}
Lure systems, quasi-monotone, quasi-odd, asymmetry, Zames-Falb multiplier.
\end{IEEEkeywords}

%
\IEEEpeerreviewmaketitle

\section{Introduction}
%
%
%
%

\IEEEPARstart{W}{e} are concerned with the input-output stability of the Lurye system given by
\begin{equation}
y_1=Gu_1,\mbox{ } y_2=\phi u_2,\mbox{ } u_1=r_1-y_2 \mbox{ and }u_2 = y_1+r_2.\label{eq:Lurye}
\end{equation}
 Let $\mathcal{L}_2$ be the space of finite energy Lebesgue integrable signals and let $\mathcal{L}_{2e}$ be the corresponding extended space (see for example  \cite{desoer75}). The Lurye system is said to be stable if $r_1,r_2\in\mathcal{L}_2\Rightarrow u_1,u_2,y_1,y_2\in\mathcal{L}_2$.
\begin{assumption}\label{ass0}
The  Lurye system~(\ref{eq:Lurye}) is assumed to be well-posed with $G:\mathcal{L}_{2e}\rightarrow\mathcal{L}_{2e}$  linear time invariant (LTI) causal and stable and with $\phi:\mathcal{L}_{2e}\rightarrow\mathcal{L}_{2e}$ some nonlinear operator.
\end{assumption}

A function $\alpha:\mathbb{R}\rightarrow\mathbb{R}$ is said to be monotone if $\alpha(x_1)\geq\alpha(x_2)$ for all $x_1\geq x_2$. It is  said to be  bounded  if there exists $C\geq 0$ such that $|\alpha(x)|\leq C|x|$ for all $x\in\mathbb{R}$\footnote{Here the term ``bounded'' is not used in the standard sense it is used for functions (e.g. \cite{Rudin:53}); rather it is used in a sense consistent with the notion of bounded operators (e.g.\cite{Kolmogorov:57}). We use the terms ``bounded below'' and ``bounded above'' in a further sense below.}. It is said to be odd if $\alpha(-x)=-\alpha(x)$ for all $x\in\mathbb{R}$. It is said to be slope-restricted on $[0,s]$ if $0\leq (\alpha(x_1)-\alpha(x_2))/(x_1-x_2)\leq s$ for all $x_1\neq x_2$. If the nonlinear operator $\phi$ can be characterised by the monotone and bounded function $\alpha:\mathbb{R}\rightarrow\mathbb{R}$  in the sense that
$y(t)  \triangleq(\phi u) (t) = \alpha(u(t))$,
then the Zames-Falb multipliers  may be used to determine stability \cite{OShea67,Zames68,desoer75,Carrasco:EJC}.  We call $\alpha$ the nonlinearity that characterises the nonlinear operator $\phi$. Further results may be obtained if the nonlinearity is odd, if it is slope-restricted or if it is both odd and slope-restricted \cite{Zames68,desoer75}.


\begin{figure}[t]
	\begin{center}
	\includegraphics[width = 0.8\linewidth]{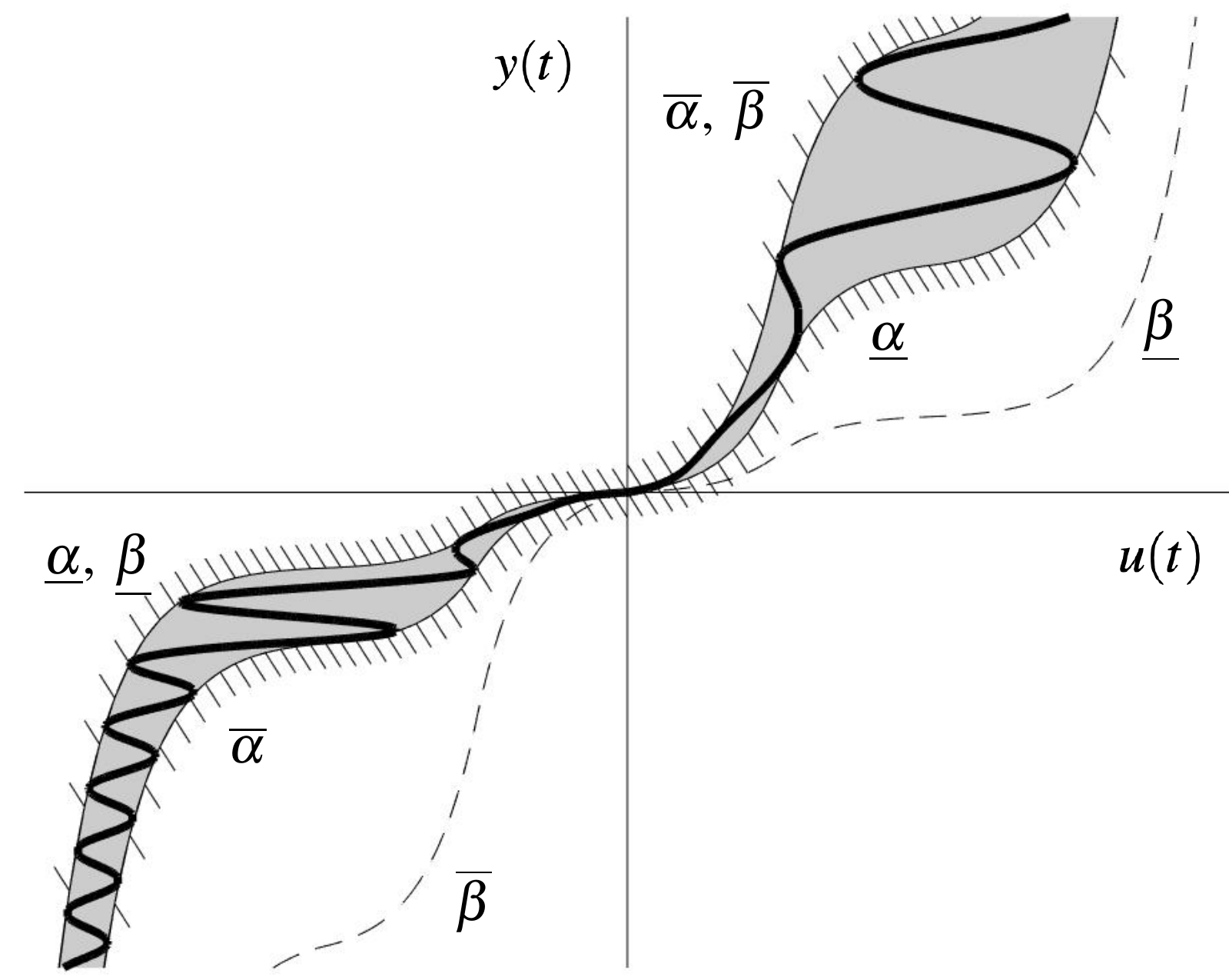}
\caption{The nonlinearity (i.e. the map from $u(t)$ to $y(t)=(\phi u)(t)$) is  bounded below and above by the monotone and bounded functions $\underline{\alpha}$ and  $\overline{\alpha}$ respectively. In addition the nonlinearity is bounded below and above by the monotone, bounded and odd functions $\underline{\beta}$  and  $\overline{\beta}$ respectively. For the specific case illustrated the functions $\underline{\beta}$ and $\overline{\beta}$ are constructed as follows:  when $u(t)<0$ set $\underline{\beta}(u(t))=\underline{\alpha}(u(t))$ and $\overline{\beta}(u(t))=-\overline{\alpha}(-u(t))$;  when $u(t)\geq 0$ set   $\underline{\beta}(u(t))=-\underline{\alpha}(-u(t))$ and $\overline{\beta}(u(t))=\overline{\alpha}(u(t))$. 
}\label{bd1}
	\end{center}
\end{figure}

In this note we consider a more general class of nonlinear operators 
where the nonlinearity need  be neither  monotone nor  a single-valued function. 
Instead, we say the nonlinear operator is characterised by a nonlinearity that is bounded below and above in the following sense.
\begin{assumption}[Fig~\ref{bd1}]\label{ass1}
Let $y(t)=(\phi u)(t)$. If $u(t)=0$ then $y(t)=0$. There are assumed to exist monotone and bounded functions $\underline{\alpha}:\mathbb{R}\rightarrow\mathbb{R}$ and $\overline{\alpha}:\mathbb{R}\rightarrow\mathbb{R}$ such that
\begin{equation}
0   \leq \frac{\underline{\alpha}(u(t))}{u(t)} \leq \frac{y(t)}{u(t)} \leq \frac{\overline{\alpha}(u(t))}{u(t)}\mbox{ for all }   u(t)\neq 0.
	\label{bound_def}
\end{equation}
We say the nonlinearity is bounded below by $\underline{\alpha}$ and above by $\overline{\alpha}$.  
There are also assumed to exist monotone, bounded and odd  functions $\underline{\beta}:\mathbb{R}\rightarrow\mathbb{R}$ and $\overline{\beta}:\mathbb{R}\rightarrow\mathbb{R}$ such that the nonlinearity is bounded below by $\underline{\beta}$ and above by $\overline{\beta}$.
\end{assumption}
\begin{remark}
For a given $u\in\mathcal{L}_{2e}$ the values of $y(t)$ remain uniquely determined even though the characterising nonlinearity may be multivalued. The Lurye problem is sometimes restricted to memoryless, but possibly time-varying, nonlinearities \cite{Vidyasagar78}. Our class   includes both dynamic and time-varying operators; nevertheless the conditions of Assumption~\ref{ass1} exclude many common nonlinearities. In the terminology of \cite{Zames66} condition (\ref{bound_def}) is an instantaneous condition. 
\end{remark}
We will often make the following further assumption.
\begin{assumption}\label{ass1a}  
Given Assumption~\ref{ass1}, there is assumed to be some finite $A\geq 1$ such that  $\overline{\alpha}$ is bounded above by $A\underline{\alpha}$. Similarly,  there is assumed to be some (possibly infinite)  $B\geq A$ such that  $\overline{\beta}$ is bounded above by $B\underline{\beta}$.
\end{assumption}


If $A=1$ then the nonlinearity is single-valued and monotone.  If $A=B=1$ then the nonlinearity is  odd. If $A=1$ and  $1<B<\infty$ we say the nonlinearity is quasi-odd. 

If $1<A<\infty$ we say the nonlinearity is quasi-monotone. If $1<A=B<\infty$ we say the nonolinearity is quasi-monotone with odd bounds.  If $1<A<B<\infty$ we say the nonolinearity is quasi-monotone with quasi-odd bounds.


\begin{remark}
In the literature \cite{Altshuller:13,Barabanov:00} ``quasimonotone''  has a wider and more general definition than that for quasi-monotone adopted here. In \cite{Materassi11}  ``quasi-monotone-and-odd'' is used for the case where, in our terminology, the nonlinearity is quasi-monotone with odd bounds. Our terminology is slightly different to that in \cite{HeathCDC15}.
\end{remark}

Our main result, Theorem~\ref{thm1}, is to derive a subclass of the Zames-Falb multipliers  that preserves the positivity of such nonlinearities. The original results of Zames and Falb \cite{Zames68}  for nonlinearities characterisd by either monotone and bounded or monotone, bounded and odd functions can be recovered as special cases  with $A=1$ and respectively where either $B$ is ignored  or $B=1$. A generalisation for quasi-odd nonlinearities follows immediately (Corollary~\ref{cor3}).

A similar approach is taken by \cite{Rantzer2001} and \cite{Materassi11} for quasi-monotone nonlinearities with odd bounds.  In \cite{Rantzer2001}  a specific stiction model is considered. Corollary~\ref{cor4} generalises the results of \cite{Rantzer2001} in two senses: firstly it allows more general bounds on the nonlinearity; secondly it allows the nonlinearity to be multivalued.
For the specific application of \cite{Rantzer2001} our results are the same. Corollary~\ref{cor4} provides a less conservative result than that of \cite{Materassi11}; a similar improvement is noted by \cite{Altshuller:13} without proof,  where the result is generalised to time-periodic, but not more general, nonlinearities. 


We extend our results to the case where the bounds on the nonlinearity are also slope-restricted (Theorem~\ref{thm2}) by applying  loop transformation techniques. A single-valued function need not be single-valued after loop transformation  (see~\cite{desoer75}) so our relaxation of the standard assumption that the nonlinearity be single-valued \cite{Zames68,Rantzer2001,Materassi11} is necessary. Once again the original results of \cite{Zames68} can be derived as special cases and we state the counterpart of Corollary~\ref{cor3} under loop transformation as Corollary~\ref{cor5}.


Our development is for continuous time multipliers. Corresponding results for discrete-time systems can be derived similarly and are briefly stated in Appendix~A.
In Appendix~B we show how the convex search for Zames-Falb multipliers \cite{Chen:95} and for their discrete-time counterparts \cite{Wang:TAC} can be modified to search for the multipliers of this paper.

We illustrate the stability results with two examples. The first, in continuous time, is similar (though not identical) to an example in \cite{Materassi11}.  It illustrates some of the subtleties that arise with loop transformations and how the new stability criteria can provide better results than those in the literature. The second example, in discrete time, illustrates how the new results can be applied to Lurye systems with asymmetric saturation. This offers insight to the behaviour of (for example) anti-windup systems with exogenous signals with non-zero steady state values. This example was discussed in \cite{HeathCDC15} where
some technical results were also presented without proof.


 


\section{Multipliers}\label{sec_gen}

\if 0
Throughout this paper we assume (see Fig~\ref{bd1}):
\begin{assumption}\label{ass1}
Let $\phi:\mathcal{L}_{2e}\rightarrow\mathcal{L}_{2e}$   be a memoryless operator satisfying (\ref{sigma_def}).
There exist a bounded monotone nonlinearity $\underline{n}_m:\mathbb{R}\rightarrow\mathbb{R}$ and a bounded odd monotone nonlinearity $\underline{n}_o:\mathbb{R}\rightarrow\mathbb{R}$ such that $\sigma$ is bounded below by $\underline{n}_m$ and $\underline{n}_o$ in the sense of (\ref{bound_def}).  Furthermore there exist  two positive scalars $R_o\geq R_m\geq1$ (with $R_m$ finite but $R_o$ possibly infinite) such that  $\sigma$ is bounded above by $R_m\underline{n}_m$ and  $R_o\underline{n}_o$.
Finally the Lurye system~(\ref{eq:Lurye}) is assumed to be well-posed.
\end{assumption}
\begin{remark}\label{rem1}
Provided $R_m<\infty$, the condition that $\sigma$ is bounded above by $R_m\underline{n}_m$ is equivalent to the existence of  a $\overline{n}_m:\mathbb{R}\rightarrow\mathbb{R}$ such that $\sigma$ is bounded above by $\overline{n}_m$ which in turn is bounded above by $R_m \underline{n}_m$. A similar statement can be made about the existence of a $\overline{n}_o$ in terms of $\sigma$, $\underline{n}_o$ and $R_o$.
\end{remark}
\fi



In our development we will exploit the Jordan decomposition \cite{Billingsley95} of a signal. If $x\in\mathcal{L}_{2e}$ its Jordan decomposition is $x=x_+-x_-$ where $x_+(t) = \max{(x(t),0)}$ for all $t\in\mathbb{R}$. We begin by establishing the following inequalities.
\begin{lemma}\label{lem1}
Under the conditions of Assumptions~\ref{ass1} and~\ref{ass1a},
 if $\phi:\mathcal{L}_{2e}\rightarrow\mathcal{L}_{2e}$  then for all $u\in\mathcal{L}_2$ and for all $\tau\in\mathbb{R}$, 
		\begin{equation}
			-B \int_{-\infty}^{\infty} u(t)y(t)\,dt\leq \int_{-\infty}^{\infty}u(t+\tau) y(t)\,dt   \leq A \int_{-\infty}^{\infty} u(t)y(t)\,dt,				\label{ineq_lem1_con}
		\end{equation}
where $y=\phi u$.
\end{lemma}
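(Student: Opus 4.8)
The plan is to extract the pointwise content of the hypotheses and then reduce both halves of~(\ref{ineq_lem1_con}) to a single elementary rearrangement estimate for monotone functions on the half-line. Dividing~(\ref{bound_def}) and Assumption~\ref{ass1a} by $u(t)$, paying attention to the sign of $u(t)$, one records that $u(t)$ and $y(t)$ always have the same sign, that $u(t)y(t)\geq u(t)\underline{\alpha}(u(t))\geq 0$ and $u(t)y(t)\geq u(t)\underline{\beta}(u(t))\geq 0$, and that $|y(t)|\leq|\overline{\alpha}(u(t))|\leq A|\underline{\alpha}(u(t))|$ and $|y(t)|\leq|\overline{\beta}(u(t))|\leq B|\underline{\beta}(u(t))|$ (for $x<0$ the hypotheses give the reversed inequalities $\overline{\alpha}(x)\geq A\underline{\alpha}(x)$, $\overline{\beta}(x)\geq B\underline{\beta}(x)$, so the absolute-value forms hold at every $t$). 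Since also $|y(t)|\leq C|u(t)|$ for a suitable $C$, $y\in\mathcal{L}_2$ and every correlation integral in~(\ref{ineq_lem1_con}) converges absolutely.

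The workhorse is: if $P\in\mathcal{L}_2$, $P\geq 0$, and $m:[0,\infty)\to[0,\infty)$ is nondecreasing with $m(0)=0$ and $m(s)\leq Cs$, then $\int_{-\infty}^{\infty}P(t+\tau)m(P(t))\,dt\leq\int_{-\infty}^{\infty}P(t)m(P(t))\,dt$ for all $\tau$. I would prove this by writing $m(P(t))=\int_0^{m(P(t))}d\lambda$ and applying Tonelli to get $\int P(t+\tau)m(P(t))\,dt=\int_0^{\infty}(\int_{E_\lambda}P(t+\tau)\,dt)\,d\lambda$ with $E_\lambda=\{t:m(P(t))>\lambda\}$, and likewise on the right-hand side. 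Monotonicity of $m$ gives, for each $\lambda>0$, a threshold $a_\lambda>0$ with $P\geq a_\lambda$ on $E_\lambda$ and $P\leq a_\lambda$ off it, and $P\geq a_\lambda>0$ on $E_\lambda$ forces $|E_\lambda|<\infty$; hence for any measurable $F$ with $|F|=|E_\lambda|$, $\int_{E_\lambda}P-\int_F P=\int_{E_\lambda\setminus F}P-\int_{F\setminus E_\lambda}P\geq a_\lambda(|E_\lambda\setminus F|-|F\setminus E_\lambda|)=0$; taking $F=E_\lambda+\tau$ and integrating over $\lambda$ finishes it. (This is the standard monotone-nonlinearity estimate and could equally be cited.)

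For the right-hand (``$A$'') inequality, use the Jordan decompositions $u=u_+-u_-$ and $y=y_+-y_-$. Since $y_+$ vanishes off $\{u>0\}$ and $y_-$ off $\{u<0\}$, the pointwise bounds give $y_+(t)\leq A\underline{\alpha}(u_+(t))$ and $y_-(t)\leq A\widehat{\alpha}(u_-(t))$ at every $t$, where $\widehat{\alpha}(s):=-\underline{\alpha}(-s)$ is again nondecreasing, nonnegative, with $\widehat{\alpha}(0)=0$ and $\widehat{\alpha}(s)\leq Cs$. Expanding the product and discarding the nonpositive term $-\int u_+(t+\tau)y_-(t)\,dt-\int u_-(t+\tau)y_+(t)\,dt$, then applying the workhorse to $(u_+,\underline{\alpha})$ and $(u_-,\widehat{\alpha})$ and recombining the two integrals over $\{u>0\}$ and $\{u<0\}$, gives $\int u(t+\tau)y(t)\,dt\leq\int u_+(t+\tau)y_+(t)\,dt+\int u_-(t+\tau)y_-(t)\,dt\leq A\int u(t)\underline{\alpha}(u(t))\,dt\leq A\int u(t)y(t)\,dt$. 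For the left-hand (``$B$'') inequality, oddness of $\underline{\beta},\overline{\beta}$ gives $|\overline{\beta}(u(t))|=\overline{\beta}(|u(t)|)$ and $|\underline{\beta}(u(t))|=\underline{\beta}(|u(t)|)$, so the bounds reduce to $|y(t)|\leq B\,\underline{\beta}(|u(t)|)$ and $-\int u(t+\tau)y(t)\,dt\leq\int|u(t+\tau)|\,|y(t)|\,dt\leq B\int|u(t+\tau)|\,\underline{\beta}(|u(t)|)\,dt\leq B\int|u(t)|\,\underline{\beta}(|u(t)|)\,dt=B\int u(t)\underline{\beta}(u(t))\,dt\leq B\int u(t)y(t)\,dt$, the middle inequality being the workhorse with $P=|u|$, $m=\underline{\beta}$. (If $B=\infty$ the claim is vacuous, and $\int u(t)y(t)\,dt=0$ forces $y\equiv0$ so both sides vanish.)

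The ``$B$'' half is short precisely because the bounds $\underline{\beta},\overline{\beta}$ are odd, which lets one pass to $|u|$ and invoke the workhorse estimate once. For the ``$A$'' half $\underline{\alpha}$ need not be odd, so $|\underline{\alpha}(u(t))|$ is not a monotone function of $|u(t)|$ and that shortcut fails; one is forced into the Jordan decomposition of $u$, splitting the correlation across $u_+$ and $u_-$ and using the reflected monotone function $\widehat{\alpha}$ on the negative branch. Keeping every inequality correctly oriented when dividing the hypotheses through by $u(t)<0$ is the step where care is most needed, and the only genuine measure-theoretic content sits in the workhorse estimate.
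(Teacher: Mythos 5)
Your proof is correct and follows essentially the same route as the paper's: the classical correlation inequalities for monotone and for odd--monotone functions, the Jordan decompositions of $u$ and $y$ with the cross terms discarded for the $A$-half, and passage to absolute values plus oddness of $\underline{\beta},\overline{\beta}$ for the $B$-half. Two points where you go beyond the paper are worth recording. First, you prove the ``workhorse'' rearrangement inequality from scratch via a layer-cake argument (which is sound: the superlevel sets $E_\lambda$ of $m\circ P$ are, up to their boundary, half-lines in the range of $P$, and your threshold comparison with the translate $F=E_\lambda+\tau$ is exactly right), whereas the paper simply cites the standard reference. Second, and more substantively, you introduce the reflected function $\widehat{\alpha}(s)=-\underline{\alpha}(-s)$ on the negative branch. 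The paper instead writes $y_-(t)\leq\overline{\alpha}(u_-(t))$ and recombines via $u_+\underline{\alpha}(u_+)+u_-\underline{\alpha}(u_-)=u\,\underline{\alpha}(u)$; taken literally those steps need $\overline{\alpha}$ and $\underline{\alpha}$ to be odd, which is not assumed. Your version, with the identity $u_+\underline{\alpha}(u_+)+u_-\widehat{\alpha}(u_-)=u\,\underline{\alpha}(u)$ and the bound $y_-(t)\leq A\widehat{\alpha}(u_-(t))$ obtained by correctly reversing the sector inequalities for $u(t)<0$, is the precise form of the argument the paper intends. Your remarks on absolute convergence of the correlation integrals and on the $B=\infty$ case are also welcome additions that the paper omits.
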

\begin{proof}
Since $\underline{\alpha}$ is monotone and bounded,  and since $\underline{\beta}$ is monotone, bounded and odd, it follows (e.g. \cite{desoer75}, p205) that for any $u\in\mathcal{L}_2$,
\begin{equation}
	\int_{-\infty}^{\infty}
				u(t+\tau)\underline{\alpha}(u(t)) 
			\,dt
\leq
		\int_{-\infty}^{\infty}
				u(t)\underline{\alpha}(u(t)) 
			\,dt, \label{ineq03}
\end{equation}
and
\begin{equation}
	\left |
	\int_{-\infty}^{\infty}
				u(t+\tau)\underline{\beta}(u(t)) 
			\,dt
	\right |
\leq
		\int_{-\infty}^{\infty}
				u(t)\underline{\beta}(u(t)) 
			\,dt.\label{ineq04}
\end{equation}

Let $u=u_+-u_-$ and $y=y_+-y_-$ be the Jordon measure decompositions of $u$ and $y$ respectively. Then for any $t,\tau\in\mathbb{R}$,
\begin{eqnarray}
u(t+\tau)y(t) & = & \left [u_+(t+\tau)-u_-(t+\tau)\right ] \left [y_+(t)-y_-(t)\right ],\nonumber\\
& \leq &  u_+(t+\tau)y_+(t)+u_-(t+\tau)y_-(t),\nonumber\\
& \leq & u_+(t+\tau)\overline{\alpha}(u_+(t))+u_-(t+\tau)\overline{\alpha}(u_-(t)),\nonumber\\
& &\mbox{by Assumption~\ref{ass1},}\nonumber\\
& \leq & Au_+(t+\tau)\underline{\alpha}(u_+(t))+Au_-(t+\tau)\underline{\alpha}(u_-(t)),\nonumber\\
& &\mbox{by Assumption~\ref{ass1a}.}\label{ineq05}
\end{eqnarray}
Hence
\begin{eqnarray}
\lefteqn{
		\int_{-\infty}^{\infty}  u(t+\tau)y(t) \,dt 
	}\nonumber\\
	& \leq &  A\int_{-\infty}^{\infty}\left [u_+(t)\underline{\alpha}(u_+(t))+u_-(t)\underline{\alpha}(u_-(t))\right ]\,dt
\mbox{ by (\ref{ineq03}),}\nonumber \\
	& = & A \int_{-\infty}^{\infty}u(t)\underline{\alpha}(u(t))\,dt,\nonumber\\
	& \leq & A \int_{-\infty}^{\infty}  u(t)y(t) \,dt\mbox{ by Assumption~\ref{ass1}.} 
\end{eqnarray}

Furthermore, for any $t,\tau\in\mathbb{R}$,
\begin{eqnarray}
|u(t+\tau)y(t)| & = & |u(t+\tau)|\,|y(t)|,\nonumber\\
& \leq & |u(t+\tau)|\,|\overline{\beta}(u(t))|\mbox{ by Assumption~\ref{ass1}},\nonumber\\
& \leq & B|u(t+\tau)|\,|\underline{\beta}(u(t))|\mbox{ by Assumption~\ref{ass1a}},\nonumber\\
& = &  B|u(t+\tau)|\,\underline{\beta}(|u(t)|)\mbox{ since $\underline{\beta}$ is odd.}\label{B_ineq}
\end{eqnarray}
Hence
\begin{eqnarray}
\lefteqn{
	\left |
		\int_{-\infty}^{\infty}  u(t+\tau)y(t) \,dt
	\right |
	\leq 
		\int_{-\infty}^{\infty}
			\left |
				u(t+\tau)
				y(t)
			\right |
			\,dt,}\nonumber\\
	& \leq &
		B\int_{-\infty}^{\infty}
			\left |
				u(t+\tau)
			\right |\,
				\underline{\beta}(|u(t)|)\mbox{ by (\ref{B_ineq}),} \nonumber\\
	& \leq & B
		\int_{-\infty}^{\infty}
			\left |
				u(t)
			\right |\,
				\underline{\beta}(|u(t)|)
			\,dt 
				\mbox{ by (\ref{ineq04}),}\nonumber\\
	& = & B
		\int_{-\infty}^{\infty}
				u(t)
				\underline{\beta}(u(t)) 
			\,dt\mbox{ since $\underline{\beta}$ is odd,}\nonumber\\
	& \leq & B
		\int_{-\infty}^{\infty}
				u(t)
				y(t)
			\,dt\mbox{ by Assumption~\ref{ass1}. } \qedhere
\end{eqnarray}
\end{proof}

Define $\mathbf{H}$ as the set of generalized functions $h(\cdot)$ of the form
\begin{equation}
h(t) = \sum_i h_i \delta(t-t_i)+h_a(t),
\end{equation}
with $t_i\neq 0$, $h_a(0)=0$, $h_i\in\mathbb{R}$ for all $i$ and $h_a(t)\in\mathbb{R}$ for all $t\in\mathbb{R}$. In addition, define the norm 
(c.f. \cite{desoer75,Vidyasagar78}\footnote{
						The notations $\|\cdot\|_{\mathcal{A}}$ and $\|\cdot\|_{\mathbf{A}}$ are used in \cite{desoer75} and \cite{Vidyasagar78} respectively.
						}):
\begin{equation}
	\|h\|_H \triangleq \sum_i |h_i| + \int_{-\infty}^\infty |h_a(t)|\,dt<\infty.
\end{equation}
Define $\mathbf{H_p}$ as the subset of $\mathbf{H}$ where $h_i\geq 0$ for all $i$ and $h_a(t)\geq 0$ for all $t\in\mathbb{R}$. 
We establish the following generalization of the Zames-Falb theorem.

\begin{theorem}[quasi-monotone or quasi-odd nonlinearity]\label{thm1} Under the conditions of Assumptions~\ref{ass0},~\ref{ass1}, and~\ref{ass1a}, let $H_+$ and $H_-$ be noncausal convolution operators whose respective impulse responses are $h_+\in\mathbf{H_p}$ and $h_-\in\mathbf{H_p}$ satisfying
	\begin{equation}
	A\|h_+\|_H+B\|h_-\|_H < 1.\label{h_ineq01}
	\end{equation}
Let  
$M=1-H_++H_-$. 
Then for any   $u\in\mathcal{L}_{2}$, 
\begin{equation}
	\int_{-\infty}^\infty  (M u)(t)\, (\phi u)(t) \, dt \geq 0.\label{int_thm1}
\end{equation}
Furthermore the continuous-time Lurye system (\ref{eq:Lurye}) is stable provided there exists $\varepsilon > 0$ such that
\begin{equation}
		\mbox{Re}\left [
				M(j\omega) G(j\omega)
				\right ] \geq \varepsilon\mbox{ for all }\omega\in\mathbb{R}.\label{ineq_thm1}
\end{equation}
\end{theorem}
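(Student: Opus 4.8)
The plan is to establish the two conclusions of Theorem~\ref{thm1} in turn: first the instantaneous inequality (\ref{int_thm1}), which is where all the new content lies and which follows from Lemma~\ref{lem1} together with the nonnegativity of the impulse responses; then the $\mathcal{L}_2$-stability claim, which follows from (\ref{int_thm1}) and (\ref{ineq_thm1}) by the standard multiplier (passivity) argument.

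For (\ref{int_thm1}), fix $u\in\mathcal{L}_2$, write $y=\phi u$, and abbreviate $\int v\,w$ for $\int_{-\infty}^{\infty}v(t)w(t)\,dt$. Two preliminary observations: since $\overline{\alpha}$ (equivalently $\overline{\beta}$) is bounded, (\ref{bound_def}) together with the convention $y(t)=0$ when $u(t)=0$ gives $|y(t)|\leq C|u(t)|$ for some $C\geq 0$, so $y\in\mathcal{L}_2$; and (\ref{bound_def}) also gives $u(t)y(t)\geq 0$ pointwise, hence $\int u\,y\geq 0$. Because $h_+,h_-\in\mathbf{H}$ have finite $\|\cdot\|_H$-norm, Young's inequality yields $H_+u,H_-u\in\mathcal{L}_2$, so $\int (Mu)\,y$ is well defined. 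Writing $h_+$ in the form $\sum_i h_{+,i}\delta(\cdot-t_i)+h_{+,a}$ that defines $\mathbf{H}$ and applying Fubini,
\[
\int (H_+u)\,y=\sum_i h_{+,i}\,I(-t_i)+\int_{-\infty}^{\infty}h_{+,a}(\sigma)\,I(-\sigma)\,d\sigma,\qquad I(\tau):=\int_{-\infty}^{\infty}u(t+\tau)y(t)\,dt,
\]
and likewise for $H_-u$. Lemma~\ref{lem1} bounds $-B\int u\,y\leq I(\tau)\leq A\int u\,y$ for every $\tau$, and since $h_+,h_-\in\mathbf{H_p}$ have only nonnegative coefficients this gives $\int (H_+u)\,y\leq A\|h_+\|_H\int u\,y$ and $\int (H_-u)\,y\geq -B\|h_-\|_H\int u\,y$. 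Hence
\[
\int (Mu)\,y=\int u\,y-\int (H_+u)\,y+\int (H_-u)\,y\;\geq\;\bigl(1-A\|h_+\|_H-B\|h_-\|_H\bigr)\int u\,y\;\geq\;0,
\]
the final inequality because the bracket is positive by (\ref{h_ineq01}) and $\int u\,y\geq 0$. This proves (\ref{int_thm1}); in operator language, $M\phi$ is positive on $\mathcal{L}_2$.

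For stability I would first record that (\ref{h_ineq01}) with $A\geq 1$ and $B\geq A\geq 1$ forces $\|h_+\|_H+\|h_-\|_H<1$, so $H_+-H_-$ has $\mathcal{L}_2$-induced norm strictly below one and $M=1-(H_+-H_-)$ is boundedly invertible on $\mathcal{L}_2$; also $\phi$ has finite gain by $|y(t)|\leq C|u(t)|$. Then I would run the classical multiplier argument: eliminating $u_1$ and $y_1$ from (\ref{eq:Lurye}) gives $u_2=w-G\phi u_2$ with $w=Gr_1+r_2\in\mathcal{L}_2$; since (\ref{ineq_thm1}) makes $MG+(MG)^{*}$ have frequency response $2\,\mbox{Re}[MG]\geq 2\varepsilon$, Parseval gives $\langle MG\,\phi u_2,\phi u_2\rangle\geq\varepsilon\|\phi u_2\|^2$, so from (\ref{int_thm1}),
\[
0\leq\langle Mu_2,\phi u_2\rangle=\langle Mw,\phi u_2\rangle-\langle MG\,\phi u_2,\phi u_2\rangle\leq\|Mw\|\,\|\phi u_2\|-\varepsilon\|\phi u_2\|^2,
\]
whence $\|\phi u_2\|\leq\|Mw\|/\varepsilon<\infty$, and then $y_2=\phi u_2$, $u_2=w-Gy_2$, $u_1=r_1-y_2$, $y_1=Gu_1$ all lie in $\mathcal{L}_2$. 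Because $M$ is noncausal, $\langle Mu_2,\phi u_2\rangle$ is not a priori defined and this chain must actually be carried out on time truncations of $u_2$ and $\phi u_2$ with the resulting bound taken uniformly in the truncation time; this is exactly the device in the proof of the original Zames--Falb theorem (see \cite{Zames68,desoer75,Carrasco:EJC}), and it goes through verbatim here since it uses only (\ref{int_thm1}) and the finite gain of $\phi$, not any single-valuedness or memorylessness.

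The main obstacle is bookkeeping rather than a new idea. In the first part one must keep the signs aligned: it is the \emph{lower} bound of Lemma~\ref{lem1}, with constant $B$, that is applied to the $+H_-$ term, which is exactly why $B$ multiplies $\|h_-\|_H$ in (\ref{h_ineq01}) while $A$ multiplies $\|h_+\|_H$; and the interchange of summation/integration with the outer integral must be justified (finiteness of $\|h_\pm\|_H$ and $u,y\in\mathcal{L}_2$). In the second part the only delicate point is the truncation argument forced by the noncausality of $M$, which is standard in the Zames--Falb literature.
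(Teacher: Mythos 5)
Your proof is correct and follows essentially the same route as the paper: the positivity inequality (\ref{int_thm1}) is obtained exactly as in the paper's proof by expanding $\int (Mu)\,y = \int u\,y - \int (H_+u)\,y + \int (H_-u)\,y$ and applying Lemma~\ref{lem1} to each shifted term, with $A$ weighting $\|h_+\|_H$ and $B$ weighting $\|h_-\|_H$ precisely as you describe (your Fubini step just makes explicit what the paper compresses into ``by Lemma~\ref{lem1}''). The only divergence is cosmetic and lies in the stability step, where the paper invokes the existence of $M^{-1}$ and a canonical (causal/anticausal) factorization of $M$ and then defers to standard multiplier theory, whereas you sketch the passivity computation and handle the noncausality by truncation; both rest on the same classical Zames--Falb machinery and the references you cite.
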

\begin{proof}
Let $y=\phi u$ and let $m$ be the impulse response\footnote{There is a typo in \cite{desoer75} that we repeat throughout \cite{HeathCDC15}. The impulse response of $M$ is $m(t)=\delta(t)-h_+(t)+h_-(t)$.} of $M$. Then
\begin{eqnarray}
\lefteqn{
\int_{-\infty}^{\infty}(M u)(t)\,(\phi u)(t)  \,dt =\int_{-\infty}^{\infty}(m*u)(t)y(t)  \,dt}\nonumber\\
	& = &  \int_{-\infty}^{\infty} u(t) y(t)  \,dt -  \int_{-\infty}^{\infty} (h_+*u)(t) y(t)\,dt \nonumber\\
& & +  \int_{-\infty}^{\infty} (h_-*u)(t) y(t) \, dt\nonumber\\
& \geq & (1 - A\|h_+\|_H-B\|h_-\|_H)\int_{-\infty}^\infty u(t) y(t) \,dt\nonumber\\
	& &\mbox{ by Lemma~\ref{lem1},}\nonumber\\
& \geq & 0\mbox{ provided (\ref{h_ineq01}) holds. } 
\end{eqnarray}
Since $M$ belongs to a subclass of the Zames-Falb multipliers, $M^{-1}$ exists. This establishes the positivity of the map from $x$ to $y$ where $u=M^{-1} x$ (see Fig~\ref{fig_3}). Similarly an appropriate  factorization of $M$ is guaranteed (see also \cite{Carrasco:12}). Stability then follows from standard multiplier theory (see e.g. \cite{desoer75}).
\begin{figure}[t]
	\begin{center}
  \ifx\JPicScale\undefined\def\JPicScale{1}\fi
\unitlength \JPicScale mm
\begin{picture}(55,10)(0,0)
\linethickness{0.3mm}
\put(10,10){\line(1,0){10}}
\put(10,0){\line(0,1){10}}
\put(20,0){\line(0,1){10}}
\put(10,0){\line(1,0){10}}
\linethickness{0.3mm}
\put(35,10){\line(1,0){10}}
\put(35,0){\line(0,1){10}}
\put(45,0){\line(0,1){10}}
\put(35,0){\line(1,0){10}}
\linethickness{0.3mm}
\put(0,5){\line(1,0){10}}
\put(0,5){\vector(-1,0){0.12}}
\linethickness{0.3mm}
\put(20,5){\line(1,0){15}}
\put(20,5){\vector(-1,0){0.12}}
\linethickness{0.3mm}
\put(45,5){\line(1,0){10}}
\put(45,5){\vector(-1,0){0.12}}
\put(15,5){\makebox(0,0)[cc]{$\phi$}}

\put(40,5){\makebox(0,0)[cc]{$M^{-1}$}}

\put(50,10){\makebox(0,0)[cc]{$x$}}

\put(27.5,10){\makebox(0,0)[cc]{$u$}}

\put(5,10){\makebox(0,0)[cc]{$y$}}

\end{picture}
	\caption{The proof of Theorem~\ref{thm1} establishes positivity from $x$ to $y$ where $x(t)=(m*u)(t)$. Note that $M^{-1}$ exists because $M$ belongs to a subclass of the Zames-Falb multipliers.}\label{fig_3}
	\end{center}
\end{figure}
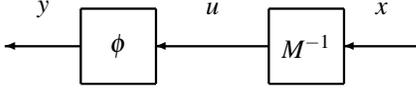
\if 0
We establish stability via the theory of delay-integral-quadratic constraints \cite{Yakubovich:02,Altshuller:04,Altshuller:11,Altshuller:13}.
Each inequality in Lemma~\ref{lem1} gives three quadratic forms (for $\tau<0$, $\tau=0$ and $\tau>0$ respectively). We may write these as 
	\[\int\limits_0^{\infty}{{{\mathcal{F}}_{oj}}(u(t),y(t),u(t-\tau ),y(t-\tau ))dt}\ge 0,\]
and 
	\[\int\limits_0^{\infty}{{{\mathcal{F}}_{mj}}(u(t),y(t),u(t-\tau ),y(t-\tau ))dt}\ge 0,\]
with
\begin{align*}
  & {{\mathcal{F}}_{o1}}({{u}_{1}},{{y}_{1}},{{u}_{2}},{{y}_{2}})= {{\mathcal{F}}_{m1}}({{u}_{1}},{{y}_{1}},{{u}_{2}},{{y}_{2}})={{u}_{1}}{{y}_{1}},
\end{align*}
(these also follow immediately from the sector bound) and
\begin{align*}
  & {{\mathcal{F}}_{o2}}({{u}_{1}},{{y}_{1}},{{u}_{2}},{{y}_{2}})={R_o{u}_{1}}{{y}_{1}}+{{u}_{2}}{{y}_{1}},\\ 
  & {{\mathcal{F}}_{o3}}({{u}_{1}},{{y}_{1}},{{u}_{2}},{{y}_{2}})={R_o{u}_{2}}{{y}_{2}}+{{u}_{1}}{{y}_{2}},\\ 
 & {{\mathcal{F}}_{m2}}({{u}_{1}},{{y}_{1}},{{u}_{2}},{{y}_{2}})={{R}_{m}}{{u}_{1}}{{y}_{1}}-{{u}_{2}}{{y}_{1}},\\ 
 & {{\mathcal{F}}_{m3}}({{u}_{1}},{{y}_{1}},{{u}_{2}},{{y}_{2}})={{R}_{m}}{{u}_{2}}{{y}_{2}}-{{u}_{1}}{{y}_{2}}. \\ 
\end{align*}	
Delay-IQC theory \cite{Altshuller:13} gives stability provided there exists some convolution operator $M$ such that
	\[\operatorname{Re}\left[ M(j\omega )G(j\omega ) \right]\ge 0,\]	
where $M(j\omega )$  is given by:	
	\begin{align*}
   M(j\omega )  = M_0 & +\int_0^{\infty}e^{-j\omega\tau}d\mu_{o2}(\tau) +\int_0^{\infty}e^{j\omega\tau}d\mu_{o3}(\tau)\\
& -\int_0^{\infty}e^{-j\omega\tau}d\mu_{m2}(\tau)-\int_0^{\infty}e^{j\omega\tau}d\mu_{m3}(\tau),
\end{align*}
with $\mu_{oj}$ and $\mu_{mj}$ measures on $[0,\infty)$ for $j=1,2,3$ and 
\begin{align*}
M_0 = & \int_0^{\infty}d\mu_{o1}(\tau)+R_o\sum_{j=2}^3\int_0^{\infty}d\mu_{oj}(\tau)\\
& +\int_0^{\infty}d\mu_{m1}(\tau)+R_m\sum_{j=2}^3\int_0^{\infty}d\mu_{mj}(\tau).
\end{align*}
We may express this more succinctly as
	\[
   M(j\omega )  = M_0 
 +\int_{-\infty}^{\infty}e^{-j\omega\tau}d\mu_{o}(\tau) \\
 -\int_{-\infty}^{\infty}e^{-j\omega\tau}d\mu_{m}(\tau),
\]
where $\mu_1$, $\mu_o$ and $\mu_m$ are measures on $(-\infty,\infty)$ 
and we have equated
\[M_0=
\int_{-\infty}^{\infty}d\mu_{1}(\tau)+R_o\int_{-\infty}^{\infty}d\mu_{o}(\tau)+R_m\int_{-\infty}^{\infty}d\mu_{m}(\tau).\]
Without loss of generality we can set $M_0=1$.
Hence
\[
   M(j\omega )=   1+\int_{-\infty}^{\infty}e^{-j\omega\tau}d\mu_{o}(\tau)  -\int_{-\infty}^{\infty}e^{-j\omega\tau}d\mu_{m}(\tau)
\]
with
\[R_o\int_{-\infty}^{\infty}\mu_{o}(t)dt   + R_m\int_{-\infty}^{\infty}\mu_{m}(t)dt < 1.\]
The impulse response of $M$ is
\[
m(t)  =1 +\mu_{o}(t) -\mu_{m}(t).
\]
\fi
\end{proof}



\begin{remark}\label{remark1}
The positivity result of Theorem~\ref{thm1} is sufficient to establish stability using classical theory \cite{desoer75}. Nevertheless it is straightforward to establish stability via the integral quadratic constraint (IQC) theory of \cite{megretski97}. 
Specifically it follows immediately from Lemma~\ref{lem1} that  
\begin{equation}
	\tau \int_{-\infty}^\infty  (M u)(t)\, (\phi u)(t) \, dt \geq 0\mbox{ for any }\tau\geq 0.
\end{equation}
Thus the homotopy argument of \cite{megretski97} can be used to establish stability. 

Similarly the stability result of Theorem~\ref{thm1} can be established  via the theory of delay-integral-quadratic constraints \cite{Yakubovich:02,Altshuller:04,Altshuller:11,Altshuller:13}. Specifically Lemma~\ref{lem1} establishes the time-domain quadratic forms used to give the frequency domain stability criterion of Theorem~\ref{thm1}. The relation between the IQC theory of \cite{megretski97} and delay-integral-quadratic constraints is discussed in \cite{Carrasco15}.
\end{remark}

\begin{remark}
We can write $M=1-H$ where $H$ is a noncausal convolution operator whose impulse response is $h\in\mathbf{H}$. The Jordan decomposition of $h$ is $h=h_+-h_-$.
\end{remark}

\section{Special cases}\label{sec_mon}

\subsection{Quasi-odd nonlinearities}

If the nonlinearity is time-invariant, bounded and monotone  we can set $\overline{\alpha}=\underline{\alpha}$ so $A=1$. The Zames-Falb theorem for such nonlinearities follows  immediately  by setting $h_-=0$.
\if 0
\begin{corollary}[bounded and monotone nonlinearity \cite{Zames68,Willems:68}]\label{cor1}

Under the conditions of Assumptions~\ref{ass0} and~\ref{ass1} with $R_m=1$, let $H_+$ be a noncausal convolution operator whose impulse response is $h_+\in\mathbf{H_p}$ (CT) or  $h_+\in\mathbf{h_p}$ (DT) satisfying
	\begin{equation}
	\|h_+\|_H< 1 \mbox{ (CT) or }\|h_+\|_1< 1\mbox{ (DT)}.\label{ineq_cor1}
	\end{equation}
Let  $M$ be
	\begin{equation}
		M=1-H_+
	\end{equation}
Then the positivity condition (\ref{int_thm1}) (CT) or (\ref{sum_thm1d}) (DT) holds and
 the  Lurye system of Fig 1 is stable provided (\ref{ineq_thm1}) (CT) or (\ref{ineq_thm1d}) (DT) holds.
%
%
%
\end{corollary}
\begin{proof}
Since $R_m=1$, setting $h_-=0$ and (\ref{ineq_cor1}) are sufficient for (\ref{h_ineq01}) in Theorem~\ref{thm1} (CT) or for (\ref{h_ineq01_d}) in Theorem~\ref{thm1d} (DT). 
\end{proof}
\fi
The Zames-Falb theorem for monotone, bounded and odd nonlinearities also follows immediately  when $A=B=1$.
\if 0
\begin{corollary}[Bounded, monotone and odd nonlinearity \cite{Zames68,Willems:68}]\label{cor2}

Under the conditions of Assumption~\ref{ass1} with $R_m=1$ and $R_o=1$, let $H$ be a noncausal convolution operator whose impulse response is $h\in\mathbf{H}$ (CT) or  $h\in\mathbf{h}$ (DT) satisfying
	\begin{equation}
	\|h\|_H< 1 \mbox{ (CT) or }\|h\|_1< 1\mbox{ (DT)}.\label{ineq_cor2}
	\end{equation}
Let  $M$ be
	\begin{equation}
		M=1-H
	\end{equation}
Then the positivity condition (\ref{int_thm1}) (CT) or (\ref{sum_thm1d}) (DT) holds and the  Lurye system of Fig 1 is stable provided (\ref{ineq_thm1}) (CT) or (\ref{ineq_thm1d}) (DT) holds.
\end{corollary}
\begin{proof}
Setting $h=h_+-h_-$ with $h_+\in\mathbf{H_p}$ and $h_-\in\mathbf{H_p}$ (CT) or $h_+\in\mathbf{h_p}$ and $h_-\in\mathbf{h_p}$ (DT), together with  (\ref{ineq_cor2}) are sufficient for (\ref{h_ineq01}) in Theorem~\ref{thm1} (CT) or for (\ref{h_ineq01_d}) in Theorem~\ref{thm1d} (DT). 
\end{proof}
\fi

The Zames-Falb theorem for odd nonlinearities allows a much wider class of multipliers, but if the nonlinearity is quasi-odd  then it cannot be used. Yet the Zames-Falb theorem for more general nonlinearities  is independent of the value $B$. This immediately suggests an intermediate result.
\begin{corollary}[quasi-odd nonlinearity]\label{cor3}

Under the conditions of Assumptions~\ref{ass0},~\ref{ass1} and~\ref{ass1a} with $A=1$, let $H_+$ and $H_-$ be noncausal convolution operators whose impulse responses are respectively $h_+\in\mathbf{H_p}$  and $h_-\in\mathbf{H_p}$ satisfying
	\begin{equation}
	\|h_+\|_H+ B\|h_-\|_H< 1. \label{ineq_cor3}
	\end{equation}
Let  $M=1-H_++H_-$. 
Then the positivity condition (\ref{int_thm1})  holds and the 
  Lurye system of Fig 1 is stable provided (\ref{ineq_thm1})  holds.
\end{corollary}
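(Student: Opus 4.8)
The plan is to derive Corollary~\ref{cor3} directly from Theorem~\ref{thm1}. Assumption~\ref{ass1a} only requires $A\geq 1$ with $A$ finite, so $A=1$ is an admissible (extremal) choice, and with this choice all three Assumptions~\ref{ass0},~\ref{ass1},~\ref{ass1a} demanded by Theorem~\ref{thm1} are in force. First I would substitute $A=1$ into the admissibility condition~(\ref{h_ineq01}): it becomes $\|h_+\|_H+B\|h_-\|_H<1$, which is precisely~(\ref{ineq_cor3}). Hence $M=1-H_++H_-$ built from $h_+,h_-\in\mathbf{H_p}$ obeying~(\ref{ineq_cor3}) is an admissible multiplier for Theorem~\ref{thm1}.

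Theorem~\ref{thm1} then delivers everything claimed: the positivity inequality~(\ref{int_thm1}) for all $u\in\mathcal{L}_2$, and stability of the Lurye system~(\ref{eq:Lurye}) whenever~(\ref{ineq_thm1}) holds for some $\varepsilon>0$. Equivalently, one may bypass the theorem statement and argue from Lemma~\ref{lem1} directly: with $A=1$ the lemma gives $-B\int_{-\infty}^\infty u(t)y(t)\,dt\leq\int_{-\infty}^\infty u(t+\tau)y(t)\,dt\leq\int_{-\infty}^\infty u(t)y(t)\,dt$, so the same convolution estimate as in the proof of Theorem~\ref{thm1} yields $\int_{-\infty}^\infty (Mu)(t)(\phi u)(t)\,dt\geq(1-\|h_+\|_H-B\|h_-\|_H)\int_{-\infty}^\infty u(t)y(t)\,dt\geq 0$, and standard multiplier theory closes the argument.

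There is essentially no obstacle here; the content is entirely in Theorem~\ref{thm1}, and Corollary~\ref{cor3} is the $A=1$ slice of it, just as the classical monotone (resp.\ monotone-odd) Zames--Falb theorems are the further specializations obtained by setting $h_-=0$ (resp.\ $B=1$). The point worth flagging is interpretive rather than technical: $M$ retains the extra sign-constrained term $H_-$ that the monotone Zames--Falb class forbids, yet, unlike the full odd Zames--Falb class, it remains valid for nonlinearities that are only quasi-odd ($1<B<\infty$) --- the price being the weight $B$ on $\|h_-\|_H$ in~(\ref{ineq_cor3}).
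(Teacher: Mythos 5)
Your proposal is correct and matches the paper's proof, which simply states that the corollary is immediate from Theorem~\ref{thm1} upon setting $A=1$ in (\ref{h_ineq01}) to obtain (\ref{ineq_cor3}). The additional remarks (the direct route via Lemma~\ref{lem1} and the comparison with the classical Zames--Falb special cases) are consistent with the paper but add nothing beyond the one-line specialization the authors use.
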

\begin{proof}
Immediate from Theorem~\ref{thm1} when $A=1$. 
\end{proof}

\subsection{Quasi-monotone nonlinearities with odd bounds}

\if 0
\begin{figure}[tb]
	\begin{center}
		\includegraphics[width = 0.8\linewidth]{fig_mat3}
		\caption{Bounds in \cite{Rantzer2001}.}\label{fig_rant}
	\end{center}

	\begin{center}
	\includegraphics[width = 0.8\linewidth]{fig_mat4}
	\caption{Bounds in \cite{Materassi11}. The solid line represents the nonlinearity $n(\cdot)$. The dashed line represents the ``skeleton'' $\hat{n}(\cdot)$.}\label{fig_mate}
	\end{center}
\end{figure}
\fi





Both  \cite{Rantzer2001} and \cite{Materassi11} consider single-valued non-monotone nonlinearities with odd bounds.  In our terminology  $\underline{\alpha}=\underline{\beta}$, $\overline{\alpha}=\overline{\beta}$ and $A=B>1$.


In \cite{Rantzer2001} a time-invariant stiction nonlinearity is given as $\alpha(u(t))=ku(t)/\varepsilon$ when $|u(t)|$ is small, and $1\leq |\alpha(u(t))|\leq 1+\delta$ when $|u(t)|$ is large. This nonlinearity can be bounded 
below by $\underline{\beta}(u(t)) = \mbox{sign}(u(t)) \times \min(k |u(t)|/\varepsilon,1)$ and above by 
$B\underline{\beta}$ with   $B=1+\delta$. In  \cite{Rantzer2001} a stability condition is given equivalent to choosing $M=1-H$ where $H$ has impulse response $h\in\mathbf{H}$ and $(1+\delta)\|h\|_H < 1$.

In \cite{Materassi11} a nonlinearity  is bounded below by  $(1-D)\hat{\beta}$ and above by $(1+D)\hat{\beta}$  for some monotone, bounded and odd ``skeleton'' $\hat{\beta}:\mathbb{R}\rightarrow\mathbb{R}$. Hence $B=(1+D)/(1-D)$.  In  \cite{Materassi11} a stability condition is given equivalent to choosing $M=1-H$ where $H$ has impulse response $h\in\mathbf{H}$ and $(1+D)^2/(1-D)^2\|h\|_H < 1$. It is observed in \cite{Altshuller:13} that it is sufficient to require $(1+D)/(1-D)\|h\|_H < 1$. The result in \cite{Altshuller:13} also extends to time-periodic (but not more general) nonlinearities.

Both the results of \cite{Rantzer2001} and \cite{Materassi11}, as well as the latter's refinement in \cite{Altshuller:13}, can be expressed as a corollary of Theorem~\ref{thm1} with $A=B$. The further generalisation that the nonlinearity need be neither memoryless nor time-invariant. 
\begin{corollary}[quasi-monotone nonlinearity with odd bounds, c.f. \cite{Rantzer2001}, \cite{Materassi11}, \cite{Altshuller:13}]\label{cor4}
	Under the conditions of Assumptions~\ref{ass0},~\ref{ass1} and~\ref{ass1a} with $A=B$,  let $H$ be a noncausal convolution operator whose impulse response is $h\in\mathbf{H}$  satisfying
	\begin{equation}
	B\|h\|_H < 1.\label{ineq_cor4}
	\end{equation}
Let  $M=1-H$. 
Then the positivity condition (\ref{int_thm1})  holds and the  Lurye system of Fig 1 is stable provided (\ref{ineq_thm1}) holds.
\end{corollary}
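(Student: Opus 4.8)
The plan is to derive this directly from Theorem~\ref{thm1} by a suitable choice of the operators $H_+$ and $H_-$, exactly as Corollary~\ref{cor3} was obtained by taking $A=1$. Here we are given $A=B$, and the multiplier to be realized is $M=1-H$ with a single noncausal convolution operator $H$ whose impulse response $h\in\mathbf{H}$ satisfies $B\|h\|_H<1$. First I would take the Jordan decomposition $h=h_+-h_-$ of the impulse response, where $h_+(t)=\max(h(t),0)$ and $h_-(t)=\max(-h(t),0)$, so that $h_+\in\mathbf{H_p}$ and $h_-\in\mathbf{H_p}$. With $H_+$ and $H_-$ the corresponding noncausal convolution operators, we have $H=H_+-H_-$ and hence $M=1-H=1-H_++H_-$, which is precisely the form required in Theorem~\ref{thm1}.

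The next step is to check the summability condition~(\ref{h_ineq01}) of Theorem~\ref{thm1}. Because the Jordan decomposition splits $h$ into its positive and negative parts with disjoint supports (up to a null set), we have $\|h\|_H=\|h_+\|_H+\|h_-\|_H$. Since $A=B$, the left-hand side of~(\ref{h_ineq01}) becomes $A\|h_+\|_H+B\|h_-\|_H=B(\|h_+\|_H+\|h_-\|_H)=B\|h\|_H$, which is strictly less than $1$ by the hypothesis~(\ref{ineq_cor4}). Thus all the hypotheses of Theorem~\ref{thm1} are met.

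Applying Theorem~\ref{thm1} then immediately yields the positivity condition~(\ref{int_thm1}), namely $\int_{-\infty}^\infty (Mu)(t)\,(\phi u)(t)\,dt\geq 0$ for all $u\in\mathcal{L}_2$, and it yields stability of the Lurye system~(\ref{eq:Lurye}) whenever the frequency-domain condition~(\ref{ineq_thm1}) holds. This completes the argument. I do not anticipate any real obstacle: the only point requiring a moment's care is the observation that the $H$-norm is additive over the Jordan decomposition (so that $B\|h\|_H<1$ is equivalent to $A\|h_+\|_H+B\|h_-\|_H<1$ when $A=B$), and that the decomposition genuinely lands in $\mathbf{H_p}\times\mathbf{H_p}$ — both of which are routine once the definitions of $\mathbf{H}$, $\mathbf{H_p}$ and $\|\cdot\|_H$ are recalled.
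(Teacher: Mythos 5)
Your argument is correct and matches the paper's own proof, which likewise takes the Jordan decomposition $h=h_+-h_-$ with $h_+,h_-\in\mathbf{H_p}$ and observes that (\ref{ineq_cor4}) then implies (\ref{h_ineq01}) with $A=B$. The additivity $\|h\|_H=\|h_+\|_H+\|h_-\|_H$ that you flag is exactly the point the paper relies on implicitly, so there is nothing to add.
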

\begin{proof}
Setting $h=h_+-h_-$ to be the Jordan decomposition of $h$ with $h_+\in\mathbf{H_p}$ and $h_-\in\mathbf{H_p}$, together with  (\ref{ineq_cor4}) are sufficient for (\ref{h_ineq01}) in Theorem~\ref{thm1}. 
\end{proof}



\section{Loop transformation}\label{sec_loop}


In classical multiplier analysis \cite{Zames68,desoer75} it is standard to apply loop transformations (Fig~\ref{loop_t}) when the nonlinearity is slope-restricted. Similarly we may apply loop transformations when a nonlinearity has slope-restricted bounds.
\begin{lemma}\label{lem3}
Under the conditions of Assumption~\ref{ass1}, suppose $\underline{\alpha}$ and  $\overline{\alpha}$ are both, in addition, slope-restricted on $[0,s]$. Let $k>s$.  Define $\underline{\alpha}_k$ as the map from $u(t)-\underline{\alpha}(u(t))/k$ to $\underline{\alpha}(u(t))$ and define $\overline{\alpha}_k$ similarly. Then $\underline{\alpha}_k$ and $\overline{\alpha}_k$ are both monotone and bounded. Furthermore the map from $u(t)-y(t)/k$ to $y(t)$ is bounded below by  $\underline{\alpha}_k$ and above by $\overline{\alpha}_k$. 

A similar statement follows if we define $\underline{\beta}_k$ and $\overline{\beta}_k$ similarly. In addtion, $\underline{\beta}_k$ and $\overline{\beta}_k$ are both odd.

\end{lemma}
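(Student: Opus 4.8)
The strategy is the standard loop-transformation argument, carried out first for the pair $\underline{\alpha}, \overline{\alpha}$ and then repeated verbatim for $\underline{\beta}, \overline{\beta}$ with the oddness carried along. First I would check that the map $v(t) \mapsto \underline{\alpha}(u(t))$, where $v(t) = u(t) - \underline{\alpha}(u(t))/k$, is well-defined, i.e. that $u \mapsto u - \underline{\alpha}(u)/k$ is a bijection of $\mathbb{R}$. This follows because $\underline{\alpha}$ is slope-restricted on $[0,s]$ with $k>s$: for $x_1 > x_2$ one has $0 \le \underline{\alpha}(x_1)-\underline{\alpha}(x_2) \le s(x_1-x_2) < k(x_1-x_2)$, hence $(x_1 - \underline{\alpha}(x_1)/k) - (x_2 - \underline{\alpha}(x_2)/k) \ge (1 - s/k)(x_1-x_2) > 0$, so the map is strictly increasing; it is also unbounded above and below (again from the slope bound $\le s < k$), hence onto. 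So $\underline{\alpha}_k$ is a genuine single-valued function on $\mathbb{R}$.

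Next I would establish monotonicity and boundedness of $\underline{\alpha}_k$. Monotonicity: with $v_i = x_i - \underline{\alpha}(x_i)/k$, since $v \mapsto x$ is increasing and $\underline{\alpha}$ is increasing, $v_1 \ge v_2 \Rightarrow x_1 \ge x_2 \Rightarrow \underline{\alpha}(x_1) \ge \underline{\alpha}(x_2)$, i.e. $\underline{\alpha}_k(v_1) \ge \underline{\alpha}_k(v_2)$. Boundedness: we need $|\underline{\alpha}_k(v)| \le C'|v|$ for some $C'$; from $|\underline{\alpha}(x)| \le C|x|$ and $v = x - \underline{\alpha}(x)/k$ one gets $|v| \ge |x| - |\underline{\alpha}(x)|/k \ge |x|(1 - C/k)$ when $x$ and $\underline{\alpha}(x)$ have the same sign (which they do, since $\underline{\alpha}$ is monotone through the origin — note $\underline{\alpha}(0)=0$ follows from boundedness), so $|v| \ge (1-C/k)|x|$... but this requires $k > C$, which need not hold. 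The cleaner route is: $\underline{\alpha}$ slope-restricted on $[0,s]$ and $\underline{\alpha}(0)=0$ forces $0 \le \underline{\alpha}(x) \le sx$ for $x \ge 0$ (and the odd-reflected bound for $x<0$), i.e. $C$ can be taken as $s$; then $|v| \ge (1 - s/k)|x|$, so $|\underline{\alpha}_k(v)| = |\underline{\alpha}(x)| \le s|x| \le \frac{s}{1-s/k}|v|$, giving boundedness with constant $sk/(k-s)$. The same for $\overline{\alpha}_k$.

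Then the bound on the transformed operator: given $y(t) = (\phi u)(t)$, set $w(t) = u(t) - y(t)/k$. We must show $\underline{\alpha}_k(w(t))/w(t) \le y(t)/w(t) \le \overline{\alpha}_k(w(t))/w(t)$ (with the sign/positivity bookkeeping of (\ref{bound_def})). Here the subtlety — and the main obstacle — is that $y(t)$ lies between $\underline{\alpha}(u(t))$ and $\overline{\alpha}(u(t))$ for the \emph{same} argument $u(t)$, whereas $\underline{\alpha}_k, \overline{\alpha}_k$ are evaluated at $w(t) = u(t) - y(t)/k$, a \emph{different} point than $u(t) - \underline{\alpha}(u(t))/k$ or $u(t)-\overline{\alpha}(u(t))/k$; one must compare across these shifted arguments and use monotonicity of the inverse transformation together with the sandwich $\underline{\alpha}(u(t)) \le y(t) \le \overline{\alpha}(u(t))$ to conclude. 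Concretely: since $y(t) \le \overline{\alpha}(u(t))$, the point $w(t) = u(t) - y(t)/k \ge u(t) - \overline{\alpha}(u(t))/k$, and as $\overline{\alpha}_k$ is monotone, $\overline{\alpha}_k(w(t)) \ge \overline{\alpha}_k(u(t) - \overline{\alpha}(u(t))/k) = \overline{\alpha}(u(t)) \ge y(t)$; similarly $y(t) \le \underline{\alpha}(u(t))$ fails in general, so for the lower bound use $y(t) \ge \underline{\alpha}(u(t)) \Rightarrow w(t) \le u(t) - \underline{\alpha}(u(t))/k \Rightarrow \underline{\alpha}_k(w(t)) \le \underline{\alpha}(u(t)) \le y(t)$. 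Combined with $w(t)$ having the same sign as $u(t)$ (from $1 - (\text{slope}) / k > 0$, so the transformation preserves sign and the $w(t)=0 \Leftrightarrow u(t)=0$, $y(t)=0$ case is handled by Assumption~\ref{ass1}), this yields (\ref{bound_def}) for the transformed operator. Finally, for $\underline{\beta}_k, \overline{\beta}_k$: everything above applies since $\underline{\beta}, \overline{\beta}$ are monotone, bounded, slope-restricted (the slope restriction of the $\alpha$'s must be assumed or deduced for the $\beta$'s — I would note that in the intended application the $\beta$'s inherit slope-restriction on $[0,s]$, e.g. by the construction in Fig~\ref{bd1}), and oddness of $\underline{\beta}_k$ follows because $x \mapsto x - \underline{\beta}(x)/k$ is an odd bijection (odd since $\underline{\beta}$ is odd), so its inverse is odd, and composing the odd inverse with the odd $\underline{\beta}$ gives an odd map; likewise for $\overline{\beta}_k$.
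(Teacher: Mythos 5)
Your proposal is correct and follows essentially the same route as the paper's proof: the paper likewise invokes the classical facts that the loop-transformed bounds remain monotone, bounded (and odd for the $\beta$'s), and then, taking $u(t)>0$ without loss of generality, derives the sector inequality from the sandwich $0\leq\underline{\alpha}(u(t))\leq y(t)\leq\overline{\alpha}(u(t))$ together with the reversed ordering of the denominators $u(t)-\underline{\alpha}(u(t))/k\geq u(t)-y(t)/k\geq u(t)-\overline{\alpha}(u(t))/k\geq 0$. You are somewhat more explicit than the paper on two points it leaves implicit --- the monotonicity step required because $\underline{\alpha}_k$ and $\overline{\alpha}_k$ must ultimately be evaluated at $u(t)-y(t)/k$ rather than at $u(t)-\underline{\alpha}(u(t))/k$ or $u(t)-\overline{\alpha}(u(t))/k$, and the fact that the $\beta$ bounds must also be slope-restricted for the ``similar statement'' to go through --- but the underlying argument is the same.
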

\begin{proof}  It is well-known that $\underline{\alpha}_k$ and $\overline{\alpha}_k$ are  monotone and bounded, and that $\underline{\beta}_k$ and $\overline{\beta}_k$ are monotone, bounded and odd \cite{Zames68,desoer75}.

Suppose (without loss of generality) that $u(t)>0$. Then
$0\leq \underline{\alpha}(u(t))\leq y(t) \leq \overline{\alpha}(u(t)).$ Similarly $u(t)-\underline{\alpha}(u(t))/k\geq u(t)-y(t)/k\geq u(t)-\overline{\alpha}(u(t))/k \geq 0$. Hence
\begin{equation}
0\leq
\frac{\underline{\alpha}(u(t))}{u(t)-\underline{\alpha}(u(t))/k}
\leq 
\frac{y(t)}{u(t)-y(t)/k}
\leq
\frac{ \overline{\alpha}(u(t))}{u(t)-\overline{\alpha}(u(t))/k}.
\end{equation}
The result for $\underline{\beta}_k$ and $\overline{\beta}_k$ follows similarly.
\end{proof}
\begin{remark}
Even when the mapping from $u(t)$ to $y(t)$ is single-valued, the mapping from $u(t)-y(t)/k$ to $y(t)$ need not be \cite{desoer75}. In particular the slope of the mapping from $u(t)$ to $y(t)$ may exceed $k$. The results of \cite{Rantzer2001}, \cite{Materassi11} cannot be applied with loop transformations without either further restrictions or the generalisation in Theorem~\ref{thm1} to possibly multivalued nonlinearities.
\end{remark}
We require a counterpart to Assumption~\ref{ass1a}.
\begin{assumption}\label{ass2}
Given Assumption~\ref{ass1}, suppose in addition $\underline{\alpha}$ and $\overline{\alpha}$ are both slope-restricted on $[0,s]$. Let $\underline{\alpha}_k$ and $\overline{\alpha}_k$ be defined as in Lemma~\ref{lem3} with $k>s$. Then there is assumed to be some finite $A_k\geq 1$ such that  $\overline{\alpha}_k$ is bounded above by $A_k\underline{\alpha}_k$.

 Similarly suppose in addition $\underline{\beta}$ and $\overline{\beta}$ are also both slope restricted on $[0,s]$. Let $\underline{\beta}_k$ and $\overline{\beta}_k$ also be defined as in Lemma~\ref{lem3} with $k>s$. Then there is assumed to be some (possibly infinite) $B_k\geq A_k$ such that  $\overline{\beta}_k$ is bounded above by $B_k\underline{\beta}_k$.
\end{assumption}

\begin{lemma}\label{lem4}
Under the conditions of Assumptions~\ref{ass0},~\ref{ass1} and~\ref{ass2},
	 if $\phi:\mathcal{L}_{2e}\rightarrow\mathcal{L}_{2e}$ then for all $u\in\mathcal{L}_2$ and for all $\tau\in\mathds{R}$, 
		\begin{multline}
			-B_k \int_{-\infty}^{\infty}\left (u(t)-y(t)/k\right ) y(t)\,dt \leq \\
			\int_{-\infty}^{\infty}\left (u(t+\tau)-y(t+\tau)/k\right )  y(t)\,dt \leq\\
 			A_k \int_{-\infty}^{\infty}\left (u(t)-y(t)/k\right )y(t)\,dt,
		\end{multline}
where $y=\phi u$.
\end{lemma}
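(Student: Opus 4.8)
The plan is to re-run the proof of Lemma~\ref{lem1} in the loop-transformed coordinates, with the transformed signal $w(t):=u(t)-y(t)/k$ playing the role that $u(t)$ plays there. By Lemma~\ref{lem3} the instantaneous map from $w(t)$ to $y(t)$ is bounded below by $\underline{\alpha}_k$ and above by $\overline{\alpha}_k$, both monotone and bounded, and also bounded below by $\underline{\beta}_k$ and above by $\overline{\beta}_k$, which are monotone, bounded and odd; by Assumption~\ref{ass2} we have $\overline{\alpha}_k\le A_k\underline{\alpha}_k$ and $\overline{\beta}_k\le B_k\underline{\beta}_k$. Thus the pair $(w,y)$ stands to $(\underline{\alpha}_k,\overline{\alpha}_k,\underline{\beta}_k,\overline{\beta}_k,A_k,B_k)$ in exactly the relationship that $(u,y)$ stands to $(\underline{\alpha},\overline{\alpha},\underline{\beta},\overline{\beta},A,B)$ in Lemma~\ref{lem1}, so the argument there carries over with $w(t)$ and $w(t+\tau)$ in place of $u(t)$ and $u(t+\tau)$, delivering the claimed three-way inequality, \emph{provided} two preliminary points are checked.

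The first is that $w\in\mathcal{L}_2$, which the shift inequalities of \cite{desoer75}, p.~205, require for $\underline{\alpha}_k$ and $\underline{\beta}_k$. Since $\overline{\alpha}$ is bounded and slope-restricted on $[0,s]$ we have $|\overline{\alpha}(x)|\le s|x|$, and hence, by the sector bound (\ref{bound_def}), $|y(t)|\le|\overline{\alpha}(u(t))|\le s|u(t)|$; so $u\in\mathcal{L}_2$ forces $y\in\mathcal{L}_2$ and therefore $w=u-y/k\in\mathcal{L}_2$. The second is the analogue of the degeneracy condition in Assumption~\ref{ass1} — that $w(t)=0$ implies $y(t)=0$ — which is exactly what makes the pointwise bound valid at times where $w(t)=0$. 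If $u(t)\neq 0$ then $|w(t)|\ge|u(t)|-|y(t)|/k\ge(1-s/k)|u(t)|>0$, using $|y(t)|\le s|u(t)|$ and $k>s$; contrapositively $w(t)=0\Rightarrow u(t)=0\Rightarrow y(t)=0$, so on the set $\{t:w(t)=0\}$ (which in fact coincides with $\{t:u(t)=0\}$) every integrand below vanishes. Strictness of $k>s$ is essential here.

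With these two points settled the rest is transcription of the proof of Lemma~\ref{lem1}. Jordan-decompose $w=w_+-w_-$ and $y=y_+-y_-$; bound $w(t+\tau)y(t)$ pointwise by $w_+(t+\tau)y_+(t)+w_-(t+\tau)y_-(t)$ and then, via Lemma~\ref{lem3} and Assumption~\ref{ass2}, by $A_k\big[w_+(t+\tau)\underline{\alpha}_k(w_+(t))+w_-(t+\tau)\underline{\alpha}_k(w_-(t))\big]$; integrate, apply the shift inequality of \cite{desoer75} for $\underline{\alpha}_k$, and re-use the sector bound of Lemma~\ref{lem3} to arrive at the upper estimate $A_k\int_{-\infty}^{\infty}(u(t)-y(t)/k)y(t)\,dt$. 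Symmetrically, bound $|w(t+\tau)y(t)|$ by $B_k|w(t+\tau)|\,\underline{\beta}_k(|w(t)|)$ (using oddness of $\underline{\beta}_k$), integrate, apply the odd-function shift inequality for $\underline{\beta}_k$, and re-use the sector bound to obtain the lower estimate. I expect no substantive obstacle beyond the loop-transformation bookkeeping noted above: the step that genuinely needs care — and that distinguishes this lemma from a verbatim copy of Lemma~\ref{lem1} — is the use of strict $k>s$ to rule out points where $w(t)=0$ but $y(t)\neq0$, after which Lemma~\ref{lem3} can be invoked for all $t$ and the argument of Lemma~\ref{lem1} applies unchanged.
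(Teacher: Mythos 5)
Your proposal is correct and follows the same route as the paper, whose proof is the one-line remark that the result is ``Immediate from Lemmas~\ref{lem1} and~\ref{lem3} and Assumption~\ref{ass2}'' --- i.e.\ exactly your plan of re-running Lemma~\ref{lem1} on the transformed pair $(u-y/k,\,y)$ with the bounds supplied by Lemma~\ref{lem3} and the constants $A_k,B_k$ from Assumption~\ref{ass2}. The two preliminary checks you add ($u-y/k\in\mathcal{L}_2$, and $w(t)=0\Rightarrow y(t)=0$ via $k>s$) are left implicit in the paper and are worthwhile, correctly argued details.
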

\begin{proof}
Immediate from Lemmas~\ref{lem1} and~\ref{lem3} and Assumption~\ref{ass2}.
\end{proof}

\begin{theorem}[quasi-monotone or quasi-odd nonlinearity with slope-restricted bounds]\label{thm2} 

Under the conditions of Assumptions~\ref{ass0},~\ref{ass1} and~\ref{ass2}, let $H_+$ and $H_-$ be noncausal convolution operators whose respective impulse responses are $h_+\in\mathbf{H_p}$ and $h_-\in\mathbf{H_p}$  satisfying
	\begin{equation}
	A_k\|h_+\|_H+B_k\|h_-\|_H   < 1.
		\label{h_ineq01a}
	\end{equation}
Let  $M=1-H_++H_-$. 
Then for any   $u\in\mathcal{L}_{2}$ 
\begin{equation}
	\int_{-\infty}^\infty  M(u-\phi u/k)(t)\, (\phi u)(t) \, dt \geq 0,\label{int_thm2}
\end{equation} 
and the  Lurye system of Fig 1 is stable provided there exists $\varepsilon>0$ such that
\begin{equation}
		\mbox{Re}\left [
			M(j\omega) (1+kG(j\omega))
		\right ] \geq \varepsilon\mbox{ for all }\omega\in\mathbb{R}.\label{ineq_thm2}
\end{equation}
\end{theorem}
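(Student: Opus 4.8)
The plan is to mirror the proof of Theorem~\ref{thm1}, but working with the loop-transformed loop variables. Set $y=\phi u$ and introduce $v(t)=u(t)-y(t)/k$, so that by Lemma~\ref{lem4} the operator $\phi$, viewed as the map from $v$ to $y$, satisfies exactly the delay inequalities of Lemma~\ref{lem1} with $A,B$ replaced by $A_k,B_k$. Then, letting $m=\delta-h_++h_-$ be the impulse response of $M$, I would expand
\begin{align*}
\int_{-\infty}^\infty (M v)(t)\,y(t)\,dt
&= \int_{-\infty}^\infty v(t)y(t)\,dt - \int_{-\infty}^\infty (h_+*v)(t)y(t)\,dt \\
&\quad + \int_{-\infty}^\infty (h_-*v)(t)y(t)\,dt,
\end{align*}
and bound the two convolution terms using $\|h_+\|_H$ and $\|h_-\|_H$ together with Lemma~\ref{lem4} exactly as in the proof of Theorem~\ref{thm1}, obtaining
\[
\int_{-\infty}^\infty (M v)(t)\,y(t)\,dt \ge \bigl(1 - A_k\|h_+\|_H - B_k\|h_-\|_H\bigr)\int_{-\infty}^\infty v(t)y(t)\,dt \ge 0,
\]
where the last step uses $v(t)y(t)\ge 0$ (which holds because $y$ lies in the positive sector with respect to $v$ by Lemma~\ref{lem3}) and the hypothesis~(\ref{h_ineq01a}). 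Since $M v = M(u - \phi u/k)$, this is precisely~(\ref{int_thm2}).

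For the stability conclusion, I would invoke the standard loop-transformation argument (see~\cite{Zames68,desoer75}): replacing $G$ by the transformed plant $\tilde G = G(1+kG)^{-1}$ and $\phi$ by the transformed nonlinearity $v\mapsto y$ leaves the closed-loop input--output map unchanged up to the invertible loop transformation, which preserves $\mathcal{L}_2$-stability. The positivity~(\ref{int_thm2}) establishes that the transformed nonlinearity is positive after passing through $M^{-1}$ (the same factorization/invertibility of $M$ as a Zames--Falb multiplier used in Theorem~\ref{thm1} applies verbatim, since $M$ has the identical structure). Classical multiplier theory then gives stability of the transformed loop provided $\mathrm{Re}[M(j\omega)\tilde G(j\omega)] \ge \varepsilon'$ for some $\varepsilon'>0$, and a routine manipulation shows this is equivalent to condition~(\ref{ineq_thm2}), namely $\mathrm{Re}[M(j\omega)(1+kG(j\omega))]\ge\varepsilon$; indeed $M\tilde G = M G(1+kG)^{-1}$ has strictly positive real part on the imaginary axis iff $M(1+kG)$ does, since $(1+kG)^{-1}$ and multiplication cancel appropriately after also noting $\mathrm{Re}[M\cdot 1 \cdot (1+kG)^{-1}]$ contributes the requisite sector shift. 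The bookkeeping here — checking well-posedness of the transformed loop, that $1+kG$ is appropriately invertible, and translating the frequency-domain inequality — is the only delicate part.

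I expect the main obstacle to be the loop-transformation equivalence of the frequency-domain conditions and the attendant well-posedness: one must be careful that $\tilde G = G(1+kG)^{-1}$ is well-defined and stable, that the transformed nonlinearity genuinely maps $\mathcal{L}_{2e}\to\mathcal{L}_{2e}$ (this is where allowing multivalued nonlinearities matters, per the remark following Lemma~\ref{lem3}), and that condition~(\ref{ineq_thm1}) applied to $\tilde G$ rearranges cleanly into~(\ref{ineq_thm2}). All of this is standard in the multiplier literature, so I would cite~\cite{desoer75} for the loop-transformation machinery rather than reproduce it. The genuinely new content — the positivity estimate~(\ref{int_thm2}) — is then an immediate transcription of the Theorem~\ref{thm1} argument with Lemma~\ref{lem4} in place of Lemma~\ref{lem1}.
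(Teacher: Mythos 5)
Your treatment of the positivity inequality (\ref{int_thm2}) is exactly what the paper intends: its proof of Theorem~\ref{thm2} is the single line ``Similar to that of Theorem~\ref{thm1}'', and your substitution of $v=u-\phi u/k$ and Lemma~\ref{lem4} for $u$ and Lemma~\ref{lem1} in the Theorem~\ref{thm1} computation, together with $v(t)y(t)\geq 0$ from Lemma~\ref{lem3}, is the correct expansion of that line.

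The stability half, however, contains a concrete error. For the slope-type loop transformation used here (Fig.~\ref{loop_t}), the transformed nonlinearity is the map from $v=u-y/k$ to $y$ and the transformed plant is $\tilde G = G + 1/k$, \emph{not} $\tilde G = G(1+kG)^{-1}$: writing $u_2=Gu_1+r_2$ and $u_1=r_1-y_2$, the new input to the nonlinearity is $v_2=u_2-y_2/k=-(G+1/k)y_2+Gr_1+r_2$. With the correct $\tilde G$, the condition $\mathrm{Re}[M(j\omega)\tilde G(j\omega)]\geq\varepsilon'$ is identical to (\ref{ineq_thm2}) up to the positive factor $k$, since $1+kG=k(G+1/k)$; no further manipulation is needed. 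By contrast, the equivalence you assert --- that $\mathrm{Re}[MG(1+kG)^{-1}]$ is uniformly positive iff $\mathrm{Re}[M(1+kG)]$ is --- is false in general (e.g.\ $M=1$, $k=1$, $G(j\omega_0)=-1+0.1j$ gives $\mathrm{Re}[G(1+G)^{-1}]=1>0$ while $\mathrm{Re}[1+G]=0$); the map $G\mapsto G(1+kG)^{-1}$ is the sector (gain-shift) transformation, not the slope transformation, and it need not even preserve stability of the open-loop stable $G$ assumed here. Once $\tilde G=G+1/k$ is used, the rest of your argument --- invertibility and factorization of $M$, positivity of the transformed nonlinearity after $M^{-1}$, well-posedness via the multivalued-nonlinearity relaxation, and the appeal to \cite{desoer75} --- goes through and matches the paper's intended proof.
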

\begin{proof}
Similar to that of Theorem~\ref{thm1}.
\end{proof}

\if 0
\begin{remark}
 Inequality (\ref{int_thm2}) is equivalent to the statement (using the terminology of \cite{megretski97}) that $\phi$ satisifies the IQC defined by
\begin{equation}\label{IQC2}
\Pi = \left[ \begin{array}{cc} 0 & M^*\\M & -(M+M^*)/K\end{array}\right ].
\end{equation}
Furthermore, since for any   $u\in\mathcal{L}_{2}$ 
\begin{equation}
\begin{aligned}
	\int_{-\infty}^\infty  (M(u)-& \tau \phi(u)/K)(t)  (\tau\phi (u))(t) \, dt \\
& \geq \tau \int_{-\infty}^\infty  (M(u)- \phi(u)/K)(t) (\phi (u))(t)\,dt\\
& \geq 0,
\end{aligned}
\end{equation}
we can say $\tau \phi $ satisifes the IQC defined by (\ref{IQC2}) for any $\tau\in[0,1]$. A silmilar statement is true for the discrete-time case. Thus the IQC theory of \cite{megretski97} can be used to establish stability. Similarly stability can be established  via the theory of delay-integral-quadratic constraints \cite{Yakubovich:02,Altshuller:04,Altshuller:11,Altshuller:13}. 
\end{remark}
\fi

\begin{figure}[t]
	\begin{center}
	\ifx\JPicScale\undefined\def\JPicScale{1}\fi
\unitlength \JPicScale mm
\begin{picture}(42.5,25)(0,0)
\linethickness{0.3mm}
\put(10,25){\line(1,0){10}}
\put(10,15){\line(0,1){10}}
\put(20,15){\line(0,1){10}}
\put(10,15){\line(1,0){10}}
\linethickness{0.3mm}
\put(10,10){\line(1,0){10}}
\put(10,0){\line(0,1){10}}
\put(20,0){\line(0,1){10}}
\put(10,0){\line(1,0){10}}
\linethickness{0.3mm}
\put(0,20){\line(1,0){10}}
\put(0,20){\vector(-1,0){0.12}}
\linethickness{0.3mm}
\put(20,20){\line(1,0){7.5}}
\put(20,20){\vector(-1,0){0.12}}
\linethickness{0.3mm}
\put(32.5,20){\line(1,0){10}}
\put(32.5,20){\vector(-1,0){0.12}}
\put(15,20){\makebox(0,0)[cc]{$\phi$}}

\put(15,5){\makebox(0,0)[cc]{$1/k$}}

\put(40,25){\makebox(0,0)[cc]{$\tilde{u}$}}

\put(25,25){\makebox(0,0)[cc]{$u$}}

\put(5,25){\makebox(0,0)[cc]{$y$}}

\linethickness{0.3mm}
\put(30,20){\circle{5}}

\linethickness{0.3mm}
\put(5,5){\line(0,1){15}}
\linethickness{0.3mm}
\put(5,5){\line(1,0){5}}
\put(10,5){\vector(1,0){0.12}}
\linethickness{0.3mm}
\put(20,5){\line(1,0){10}}
\linethickness{0.3mm}
\put(30,5){\line(0,1){12.5}}
\put(30,17.5){\vector(0,1){0.12}}
\end{picture}
	\caption{Loop transformation. The map from $\tilde{u}(t)=u(t)-y(t)/k$ to $y(t)$ may be multivalued, even if the map from $u(t)$ to $y(t)$ is single-valued. Similarly the ratios of bounds on the nonlinearity (i.e. $A$ and $B$ for the map from $u(t)$ to $y(t)$ and $A_k$ and $B_k$ for the map from $u(t)-y(t)/k$ to $y(t)$) are not necessarily preserved under loop transformation.}\label{loop_t}
	\end{center}
\end{figure}
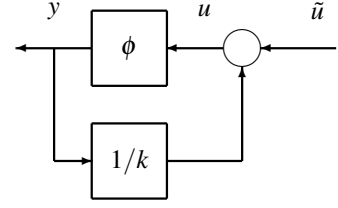

There is no guarantee that $A_k$ (or $B_k$) is small, even when $A$ (or $B$) is. In fact it is straightforward to construct examples where $A_k$ (or $B_k$) can be arbitrarily large. By the same token, there are cases where $A_k=A$ (and where $B_k=B$).  Here we consider such a case for $A_k$ where the nonlinearity is monotone.

\begin{lemma}\label{lem5}
Suppose the nonlinearity is monotone. Then $A_k=A=1$.
\end{lemma}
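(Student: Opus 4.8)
The plan is to reduce everything to the single observation that a monotone nonlinearity admits lower and upper bounds that coincide. Since the nonlinearity is monotone it is characterised by a single-valued monotone and bounded function $\alpha$ with $y(t)=\alpha(u(t))$; taking $\underline{\alpha}=\overline{\alpha}=\alpha$ satisfies Assumption~\ref{ass1}, and then $\overline{\alpha}$ is bounded above by $1\cdot\underline{\alpha}$, so $A=1$. The odd bounds $\underline{\beta},\overline{\beta}$ play no role here, since the claim concerns only $A_k$.

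For $A_k$ I would use the hypotheses already built into Assumption~\ref{ass2}: with $\underline{\alpha}=\overline{\alpha}=\alpha$ this function is slope-restricted on $[0,s]$ and $k>s$. The step needing a little care is to check that $u\mapsto u-\alpha(u)/k$ is injective, which follows because for $u_1>u_2$ its increment satisfies
\[
(u_1-u_2)-\frac{\alpha(u_1)-\alpha(u_2)}{k}\geq\Big(1-\frac{s}{k}\Big)(u_1-u_2)>0 .
\]
Hence this map is a bijection onto its range, and $\alpha_k$, the map from $u-\alpha(u)/k$ to $\alpha(u)$ --- which Lemma~\ref{lem3} already guarantees to be monotone and bounded --- is single-valued.

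Because the construction in Lemma~\ref{lem3} depends only on the function being transformed, and $\underline{\alpha}=\overline{\alpha}=\alpha$, it follows that $\underline{\alpha}_k=\overline{\alpha}_k=\alpha_k$; thus $\overline{\alpha}_k$ is bounded above by $1\cdot\underline{\alpha}_k$, i.e.\ $A_k=1$, which together with $A=1$ gives the claim. There is no genuine obstacle here; the only point that must be made explicit rather than merely assumed is that the loop transformation acts identically on the two bounds when they are equal, which the strict monotonicity displayed above makes rigorous.
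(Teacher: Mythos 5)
Your proposal is correct and follows essentially the same route as the paper: take $\underline{\alpha}=\overline{\alpha}=\alpha$, so that the loop transformation of Lemma~\ref{lem3} produces $\underline{\alpha}_k=\overline{\alpha}_k$ and hence $A_k=A=1$. The injectivity check you add is a harmless extra detail beyond the paper's two-line argument.
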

\begin{proof}
We may set $y(t)=\alpha u(t)$ for some monotone $\alpha$ and $\overline{\alpha}=\underline{\alpha}=\alpha$. Hence  $\overline{\alpha}_k = \underline{\alpha}_k$.
\end{proof}

\begin{corollary}[quasi-odd nonlinearity with slope-restricted bounds]\label{cor5}
Under the conditions of Theorem~\ref{thm2}, suppose the nonlinearity  is in addition monotone. Then (\ref{h_ineq01a}) may be replaced by the condition
	\begin{equation}
	\|h_+\|_H+B_k\|h_-\|_H  < 1.
	\end{equation}
\end{corollary}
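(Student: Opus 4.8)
The plan is to reduce the claim directly to Theorem~\ref{thm2} by invoking Lemma~\ref{lem5}. Since the nonlinearity is assumed monotone, I would write $y(t)=\alpha(u(t))$ with $\alpha$ monotone and bounded, and take $\underline{\alpha}=\overline{\alpha}=\alpha$; under the standing hypotheses of Theorem~\ref{thm2} this $\alpha$ is slope-restricted on $[0,s]$, so Assumption~\ref{ass2} applies and Lemma~\ref{lem5} yields $A_k=1$. Substituting $A_k=1$ into condition~(\ref{h_ineq01a}) collapses it to $\|h_+\|_H+B_k\|h_-\|_H<1$, which is exactly the stated replacement. Everything else — the positivity inequality~(\ref{int_thm2}) and the frequency-domain stability condition~(\ref{ineq_thm2}) — is then inherited verbatim from Theorem~\ref{thm2}.

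The one point worth spelling out is why only the coefficient of $\|h_+\|_H$ improves while the $B_k$ term is untouched. Monotonicity forces $\overline{\alpha}_k=\underline{\alpha}_k$ (as in the proof of Lemma~\ref{lem5}), hence $A_k=1$; but a monotone function need not be odd, so the odd bounds $\underline{\beta}_k$ and $\overline{\beta}_k$ supplied by Lemma~\ref{lem3} still differ in general, and $B_k$ remains whatever finite value Assumption~\ref{ass2} provides. Consequently $h_-$ may still be taken nonzero, so Corollary~\ref{cor5} genuinely enlarges the multiplier class available for a monotone nonlinearity beyond the classical choice $h_-=0$.

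I do not anticipate a real obstacle: the content is entirely in Lemma~\ref{lem5}, and the corollary is just the bookkeeping of setting $A_k=1$ in Theorem~\ref{thm2}. The only thing to be careful about is confirming that a monotone $\alpha$ which is slope-restricted on $[0,s]$ indeed satisfies Assumption~\ref{ass2} with the choice $\underline{\alpha}=\overline{\alpha}=\alpha$, so that Lemma~\ref{lem5} is applicable in the slope-restricted (post-loop-transformation) setting; this is immediate, since $\underline{\alpha}_k$ and $\overline{\alpha}_k$ are then literally the same function and $\overline{\alpha}_k$ is trivially bounded above by $1\cdot\underline{\alpha}_k$.
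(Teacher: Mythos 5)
Your proposal is correct and follows exactly the paper's own route: the paper proves Corollary~\ref{cor5} as ``immediate from Theorem~\ref{thm2} and Lemma~\ref{lem5},'' i.e.\ monotonicity gives $\underline{\alpha}=\overline{\alpha}$, hence $\overline{\alpha}_k=\underline{\alpha}_k$ and $A_k=1$, which is then substituted into (\ref{h_ineq01a}). Your additional remarks on why $B_k$ is unaffected and why $h_-$ may remain nonzero are accurate elaborations, not deviations.
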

\begin{proof}
Immediate from Theorem~\ref{thm2} and Lemma~\ref{lem5}.
\end{proof}

\section{Example with deadzone and monotone slope-restricted bounds}\label{sec:ex01}
In this section and the next we illustrate the practical applicability of the multipliers. The example in this section is continuous-time while the example in the next is discrete-time. In both cases we exploit loop transformation.

In this section we give an example of a class of nonlinearity with deadzone where Theorem~\ref{thm2} gives better results than the circle criterion. It is similar in spirit to an example in \cite{Materassi11} but differs in that the deadzone need not be symmetric, the bounds need not be symmetric, the nonlinearity itself need be neitehr memoryless nor time-invariant and we apply loop transformation. The example illustrates how the values $A_k$ and $B_k$ may differ from $A$ and $B$ and hence the set of multipliers available if we apply Theorem~\ref{thm2} may be smaller than the set available if we apply Theorem~\ref{thm1}.

\subsection{Nonlinearity with deadzone and monotone slope-restricted bounds}

Suppose the nonlinearity is bounded by
\begin{equation}
\underline{\alpha}(u(t)) = 
	\left \{
		\begin{array}{lcc}
			s_{n1} (u(t)+d_n)  &\mbox{for}&u(t)<-d_n\\
			0 & \mbox{for}& -d_n\leq u(t) \leq d_p\\
			s_{p1} (u(t)-d_p) & \mbox{for}&u(t)>d_p
		\end{array}
	\right .
\end{equation}
and
\begin{equation}
\overline{\alpha}(u(t)) = 
	\left \{
		\begin{array}{lcc}
			s_{n2} (u(t)+d_n)  &\mbox{for}&u(t)<-d_n\\
			0 & \mbox{for}& -d_n\leq u(t) \leq d_p\\
			s_{p2} (u(t)-d_p) & \mbox{for}&u(t)>d_p
		\end{array}
	\right .
\end{equation}
with $0<s_{n1}< s_{n2}$, $0<s_{p1}<s_{p2}$ and $d_n>0$, $d_p>0$ (Fig~\ref{Fig_ex1}).
\begin{figure}[t]
\begin{center}
\includegraphics[width=0.8\linewidth]{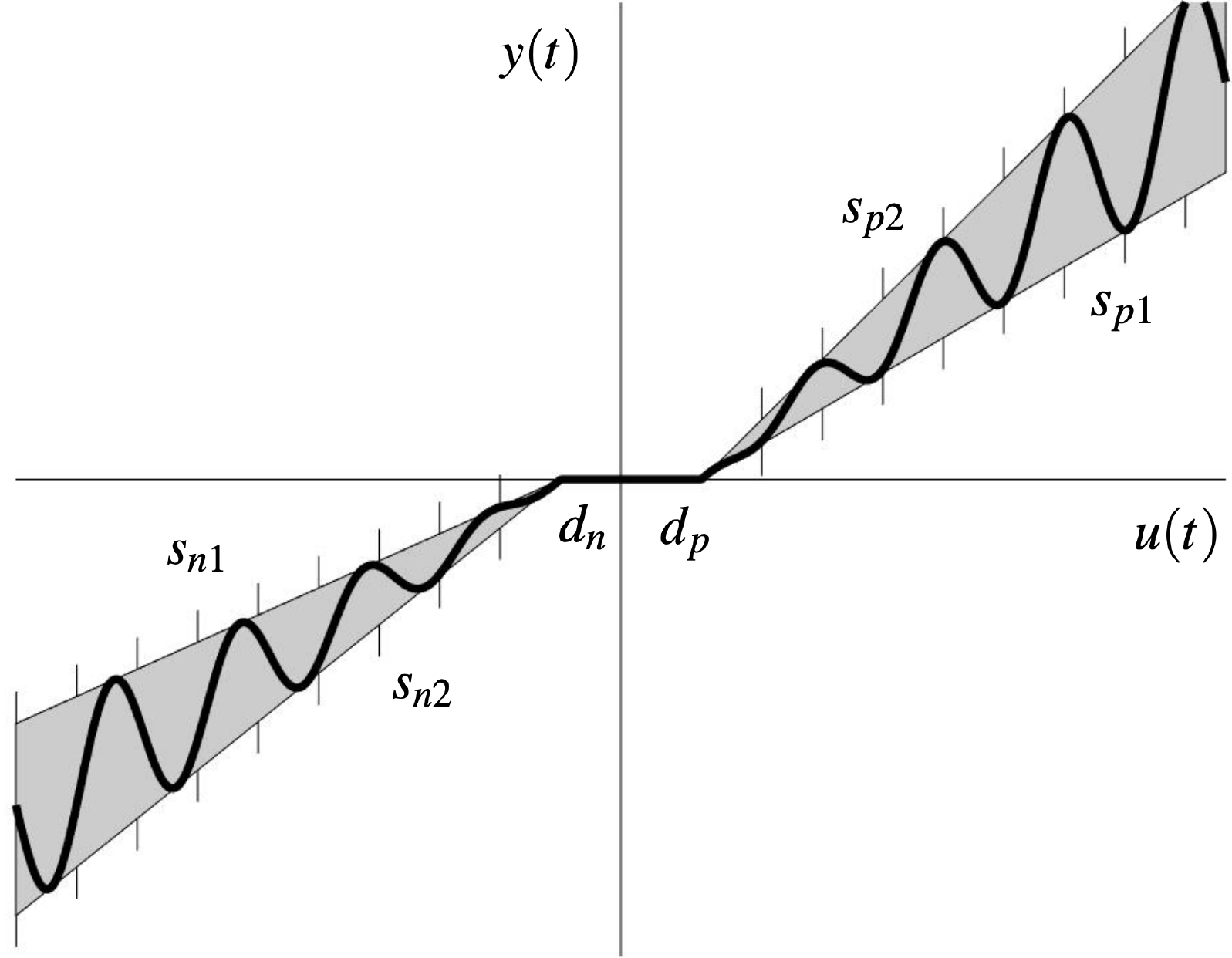}
\caption{Nonlinearity bounds for the example of Section~\ref{sec:ex01}. The analysis of \cite{Rantzer2001}, \cite{Materassi11} and  \cite{Altshuller:13}  cannot be used when $d_n\neq d_p$. We provide a specific example where Theorem~\ref{thm2} gives better results than the circle criterion.}\label{Fig_ex1}
\end{center}
\end{figure}
 It follows that
\begin{equation}
A = \max\left (\frac{s_{n2}}{s_{n1}},\frac{s_{p2}}{s_{p1}}\right ),
\end{equation}
and
\begin{equation}
B = \frac
		{
			\max(s_{n2},s_{p2})
		}
		{
			\min(s_{n1},s_{p1})
		} 
	\mbox{ when } d_n=d_p,
\end{equation}
but there is no finite $B$ when $d_n\neq d_p$.

Both $\underline{\alpha}$ and $\overline{\alpha}$ are monotone and slope-restricted on $[0,s]$ with $s=\max(s_{n2},s_{p2})$. If we apply a loop transformation with $k>s$ the new bounds $\underline{\alpha}_k$ and $\overline{\alpha}_k$ of Lemma~\ref{lem3} are given by
\begin{equation}
\underline{\alpha}_k(u(t)) = 
	\left \{
		\begin{array}{lcc}
			\tilde{s}_{n1} (u(t)+d_n)  &\mbox{for}&u(t)<-d_n\\
			0 & \mbox{for}& -d_n\leq u(t) \leq d_p\\
			\tilde{s}_{p1} (u(t)-d_p) & \mbox{for}&u(t)>d_p
		\end{array}
	\right .
\end{equation}
and
\begin{equation}
\overline{\alpha}_k(u(t)) = 
	\left \{
		\begin{array}{lcc}
			\tilde{s}_{n2} (u(t)+d_n)  &\mbox{for}&u(t)<-d_n\\
			0 & \mbox{for}& -d_n\leq u(t) \leq d_p\\
			\tilde{s}_{p2} (u(t)-d_p) & \mbox{for}&u(t)>d_p
		\end{array}
	\right .
\end{equation}
with 
\begin{equation}
\begin{aligned}
\tilde{s}_{n1}=\frac{ks_{n1}}{k-s_{n1}}
\mbox{, }
\tilde{s}_{n2}=\frac{ks_{n2}}{k-s_{n2}}
\mbox{, }\\
\tilde{s}_{p1}=\frac{ks_{p1}}{k-s_{p1}}
\mbox{ and }
\tilde{s}_{p2}=\frac{ks_{p2}}{k-s_{p2}}.
\end{aligned}
\end{equation}
Hence
\begin{equation}
	A_k = \max \left ( \frac{s_{n2}}{s_{n1}}\frac{k-s_{n1}}{k-s_{n2}}, \frac{s_{p2}}{s_{p1}}\frac{k-s_{p1}}{k-s_{p2}}\right )
	\end{equation}
and $B_k$ can be found similarly when $d_n=d_p$.

\subsection{Stability criteria}

Now consider the continuous-time Lurye system (\ref{eq:Lurye}). The circle criterion can be used to establish stability provided $\mbox{Re}[G(j\omega)] > -1/\max(s_{n2},s_{p2})$ for all $\omega$. If the nonlinearity itself is time-varying the Popov criterion cannot be used, and similalry if the nonlinearity is not monotone then the Zames-Falb criterion cannot be used.  If $d_n\neq d_p$ then there is no finite $B$  so none of the criteria of \cite{Rantzer2001}, \cite{Materassi11} or  \cite{Altshuller:13} can be used to establish stability. However either Theorem~\ref{thm1} or Theorem~\ref{thm2} may be used. Here we illustrate the use of Theorem~\ref{thm2}.

\subsection{Specific example}

As a specifc example, let $G$ be the resonant system with delay
\begin{equation}\label{G_ex1}
	G(s) = e^{-s/5} \frac{1}{s^2+0.3s+1}
\end{equation}
and let $s_{n1}=s_{p1} = 0.5$ and $s_{n2}=s_{p2}=0.6$.

For this example the circle criterion fails to establish stability since  $\min\left (\mbox{Re }[G(j\omega)]\right )\approx -1.8195 < -1/0.6$. Similarly Theorem 1 fails to stablish stability directly since the phase of $G$ drops to $-\infty$ as $\omega\rightarrow\infty$. 
\if 0
\begin{figure}[b]
\begin{center}
\includegraphics[width=\linewidth]{fig02circle}
\caption{Nyquist plot of $G(j\omega)$. For some $\omega$ the real part lies to the left of the vertical line through $-1/0.6$. Hence the circle criterion cannot be used to establish stability.}\label{Fig_Nyquist}
\end{center}
\end{figure}
\fi

However, if we apply a loop transformation with $k=1$ we obtain
$\tilde{s}_{n1}=\tilde{s}_{p1}=1$, $\tilde{s}_{n2}=\tilde{s}_{p2}=3/2$ and hence $\tilde{R}_m = 3/2$. Define the multiplier
	$M_\varepsilon(s) = 1 - (2/3-\varepsilon) e^{-0.7s}\mbox{ with }\varepsilon\geq0$.
We find 
	$-90^o < \angle M_0(j\omega) (1+G(j\omega)) < 90^o$ 
for all $\omega$.
If follows by continuity that there is an $\varepsilon>0$ such that  Theorem~\ref{thm2} with multiplier $M_\varepsilon$  establishes stability.
\if 0
\begin{figure}[t]
\begin{center}
\includegraphics[width=\linewidth]{fig04circle}
\caption{
Phase of $1+G$ (red) with $G$ given by (\ref{G_ex1}), $M_0=1 - 2/3 e^{-0.7s}$ (blue) and $M_0(1+G)$ (green). The latter lies between $-90^o$ and $+90^o$. Hence Theorem~\ref{thm2} can be used to establish stability.}\label{Fig_phase}
\end{center}
\end{figure}
\fi

\section{Example  with asymmetric saturation}\label{sec_sat}\label{sec:ex02}
\subsection{Asymmetric saturation}

One of our motivations is to study asymmetric saturation. 
This is of high practical importance as it corresponds to the case with odd bounds but constant offset (due to non-zero setpoint or disturbance). 
It
 is possible for a Lurye system to be stable with symmetric saturation but unstable with asymmetric saturation \cite{HeathECC15,Heath14} and 
the behaviour of Lurye systems with asymmetric saturation continues to be of interest \cite{Groff:19}.  
In this example stability is guaranteed with exogenous signals whose steady-state is small but exhibits cycling with exogenous signals whose steady state is large.

Define the asymmetric saturation with gain~$s$ as:
\begin{equation}
\mbox{sat}_{s,-m,n}(u(t)) = \left \{
\begin{array}{l}
-m\mbox{ for } u(t) < \frac{-m}{s},\\
su(t)\mbox{ for } \frac{-m}{s}\leq u(t) \leq \frac{n}{s}\\
n\mbox{ for } \frac{n}{s} < u(t)
\end{array}
\right .
\label{asat}
\end{equation} 
where $s>0$, $m>0$ and $n>0$  (Fig~\ref{fig_sat}).

The nonlinearity is monotone so $A=1$. Define 
\begin{equation}
	\underline{B}=\min\{m,n\} \mbox{ and }  \overline{B}=\max\{m,n\}. \label{def_Bs}
\end{equation}
The nonlinearity is bounded below by $\underline{\beta}=\mbox{sat}_{s,-\underline{B},\underline{B}}$ and above by  $\overline{\beta}=\mbox{sat}_{s,-\overline{B},\overline{B}}$. Hence it is quasi-odd with $B=\overline{B}/\underline{B}$.

\if 0
\begin{lemma}\label{first_asym_lemma}
Consider the Lurye system (\ref{eq:Lurye}), assumed to be well-posed with $G$  LTI, with $\phi$ given by the asymmtetric saturation~(\ref{asat}). Let $k>s$.
Then the conditions of Assumptions~\ref{ass0},~\ref{ass1},~\ref{ass1a} and~\ref{ass2} hold with $A_k=A=1$ and $B_k=B=\overline{B}/\underline{B}$ where $\overline{B}$ and $\underline{B}$ are given by~(\ref{def_Rs}).
\end{lemma}
\begin{proof}
We have established all but the last identity. It follows immediately from the definitions of $\underline{\beta}$ and $\overline{\beta}$ that $B=\overline{B}/\underline{B}$.
Let $k=s+\varepsilon$. The map from $u(t)-\mbox{sat}_{s,a,b}(u(t))/k$ to $\mbox{sat}_{s,a,b}(u(t))$ is $\mbox{sat}_{1/\varepsilon,a,b}(u(t))$.  Hence $B_k=\overline{B}/\underline{B}$ also.
\end{proof}
\fi


\begin{corollary}[asymmetric saturation]\label{cor6}
Under the conditions of Theorem~\ref{thm2}, suppose $\phi$ is given by the asymmtetric saturation~(\ref{asat}). Then (\ref{h_ineq01a}) may be replaced by the condition
	\begin{equation}
	\|h_+\|_H+B_k\|h_-\|_H < 1
\mbox{ where }
B_k =\overline{B}/\underline{B},
\label{ineq_cor6}
	\end{equation}
where $\overline{B}$ and $\underline{B}$ are given by (\ref{def_Bs}).
 Furthermore we may allow $k=s$.
\end{corollary}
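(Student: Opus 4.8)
The plan is to read off the constants $A_k$ and $B_k$ for the asymmetric saturation, invoke Corollary~\ref{cor5}, and then remove the standing restriction $k>s$ by a uniform perturbation argument. First I would check that $\mathrm{sat}_{s,-m,n}$ meets the hypotheses of Theorem~\ref{thm2}. It is monotone, so we may take $\overline{\alpha}=\underline{\alpha}=\mathrm{sat}_{s,-m,n}$; this function is bounded (with $C=s$) and slope-restricted on $[0,s]$, and by Lemma~\ref{lem5} it forces $A=A_k=1$. For the odd bounds take $\underline{\beta}=\mathrm{sat}_{s,-\underline{B},\underline{B}}$ and $\overline{\beta}=\mathrm{sat}_{s,-\overline{B},\overline{B}}$ with $\underline{B},\overline{B}$ as in~(\ref{def_Bs}): these are monotone, bounded, odd and slope-restricted on $[0,s]$, the nonlinearity is bounded below by $\underline{\beta}$ and above by $\overline{\beta}$ in the sense of~(\ref{bound_def}), and $\overline{\beta}$ is bounded above by $(\overline{B}/\underline{B})\underline{\beta}$, so Assumptions~\ref{ass1},~\ref{ass1a} and~\ref{ass2} hold with $B=\overline{B}/\underline{B}$. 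By Corollary~\ref{cor5} the condition~(\ref{h_ineq01a}) in Theorem~\ref{thm2} then reduces to $\|h_+\|_H+B_k\|h_-\|_H<1$; it remains to evaluate $B_k$ and to justify $k=s$.

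For any $k>s$ I would compute the transformed bounds of Lemma~\ref{lem3} explicitly. On the linear segment of $\underline{\beta}$ one has $\underline{\beta}(u)=su$ and $u-\underline{\beta}(u)/k=u(k-s)/k$, so $\underline{\beta}_k$ has slope $sk/(k-s)$ there, while on the saturated segments $\underline{\beta}_k$ still takes the values $\pm\underline{B}$; hence $\underline{\beta}_k=\mathrm{sat}_{sk/(k-s),-\underline{B},\underline{B}}$, and in the same way $\overline{\beta}_k=\mathrm{sat}_{sk/(k-s),-\overline{B},\overline{B}}$. Since the two transformed bounds have the same slope, $\overline{\beta}_k$ is bounded above by $(\overline{B}/\underline{B})\underline{\beta}_k$, so $B_k=\overline{B}/\underline{B}$ for every $k>s$ and, crucially, $B_k$ does not depend on $k$. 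Substituting $A_k=1$ and this value into~(\ref{h_ineq01a}) gives~(\ref{ineq_cor6}), which proves the corollary for every $k>s$.

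Finally I would extend the statement to $k=s$, which is the one genuinely delicate step: at $k=s$ the loop transformation degenerates, the slope $sk/(k-s)$ becomes infinite, and the map from $\tilde u=u-y/s$ to $y$ is a relay, which is not bounded in the sense of Assumption~\ref{ass1}, so Theorem~\ref{thm2} cannot be invoked with $k=s$ directly. I would instead perturb $k$ slightly. Because $G$ is stable, $\|G\|_\infty<\infty$, and because $\|h_+\|_H,\|h_-\|_H<\infty$ we have $|M(j\omega)|\le c:=1+\|h_+\|_H+\|h_-\|_H$ for all $\omega$, whence $|\mbox{Re}[M(j\omega)G(j\omega)]|\le c\|G\|_\infty$ uniformly in $\omega$. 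Thus, if there is $\varepsilon>0$ with $\mbox{Re}[M(j\omega)(1+sG(j\omega))]\ge\varepsilon$ for all $\omega$, then for $k=s+\delta$ with $0<\delta<\varepsilon/(2c\|G\|_\infty)$ we get
\[
\mbox{Re}[M(j\omega)(1+kG(j\omega))]=\mbox{Re}[M(j\omega)(1+sG(j\omega))]+\delta\,\mbox{Re}[M(j\omega)G(j\omega)]\ge \varepsilon/2>0
\]
for all $\omega$; since $k>s$ and $B_k=\overline{B}/\underline{B}$ at this $k$, the case already established applies and yields stability of~(\ref{eq:Lurye}). The main obstacle is exactly this $k=s$ degeneracy; once it is by-passed by the perturbation, the remaining work is the uniform-in-$\omega$ bound on $\mbox{Re}[MG]$, which is immediate from stability of $G$, and the elementary slope computation for the transformed saturations.
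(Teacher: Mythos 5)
Your proposal is correct and follows essentially the same route as the paper: take the odd bounds $\underline{\beta}=\mathrm{sat}_{s,-\underline{B},\underline{B}}$ and $\overline{\beta}=\mathrm{sat}_{s,-\overline{B},\overline{B}}$, observe that for $k>s$ the loop-transformed bounds are again saturations with the same levels (only a steeper common slope), so $B_k=\overline{B}/\underline{B}$ independently of $k$ while $A_k=1$ by monotonicity, and then recover $k=s$ by passing to $k=s+\delta$ with $\delta$ small. The only difference is presentational: the paper delegates the $k\to s$ limit to a cited limiting argument, whereas you make the perturbation of the frequency-domain inequality explicit via the uniform bound $|M(j\omega)|\le 1+\|h_+\|_H+\|h_-\|_H$ and the finiteness of $\|G\|_\infty$, which is a sound way to fill in that step.
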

\begin{proof}
Since $\underline{\beta}$ and $\overline{\beta}$ are slope-restricted on $[0,s]$ we can apply Corollary~\ref{cor5}. 
Let $k=s+\varepsilon$ with $\varepsilon>0$.  
The map from $u(t)-\mbox{sat}_{s,-m,n}(u(t))/k$ to $\mbox{sat}_{s,-m,n}(u(t))$ is $\mbox{sat}_{1/\varepsilon,-m,n}(u(t))$.  Hence $\overline{\beta}_k$ is bounded above by $B_k\underline{\beta}_k$.
A limiting argument for $\varepsilon\rightarrow 0$ can be made following~\cite{Carrasco:13}. 
\end{proof}
\begin{remark}
It is straightforward to show that  (\ref{h_ineq01a}) may be replaced by (\ref{ineq_cor6})  for any saturation function $\mbox{sat}_{s,-\mu,\nu}$ with $s>0$, $\underline{B}\leq \mu\leq \overline{B}$ and $\underline{B}\leq \nu\leq \overline{B}$ where $\overline{B}$ and $\underline{B}$ are given by~(\ref{def_Bs}).
\end{remark}

\begin{figure}[t]
	\begin{center}
		\includegraphics[width = 0.8\linewidth]{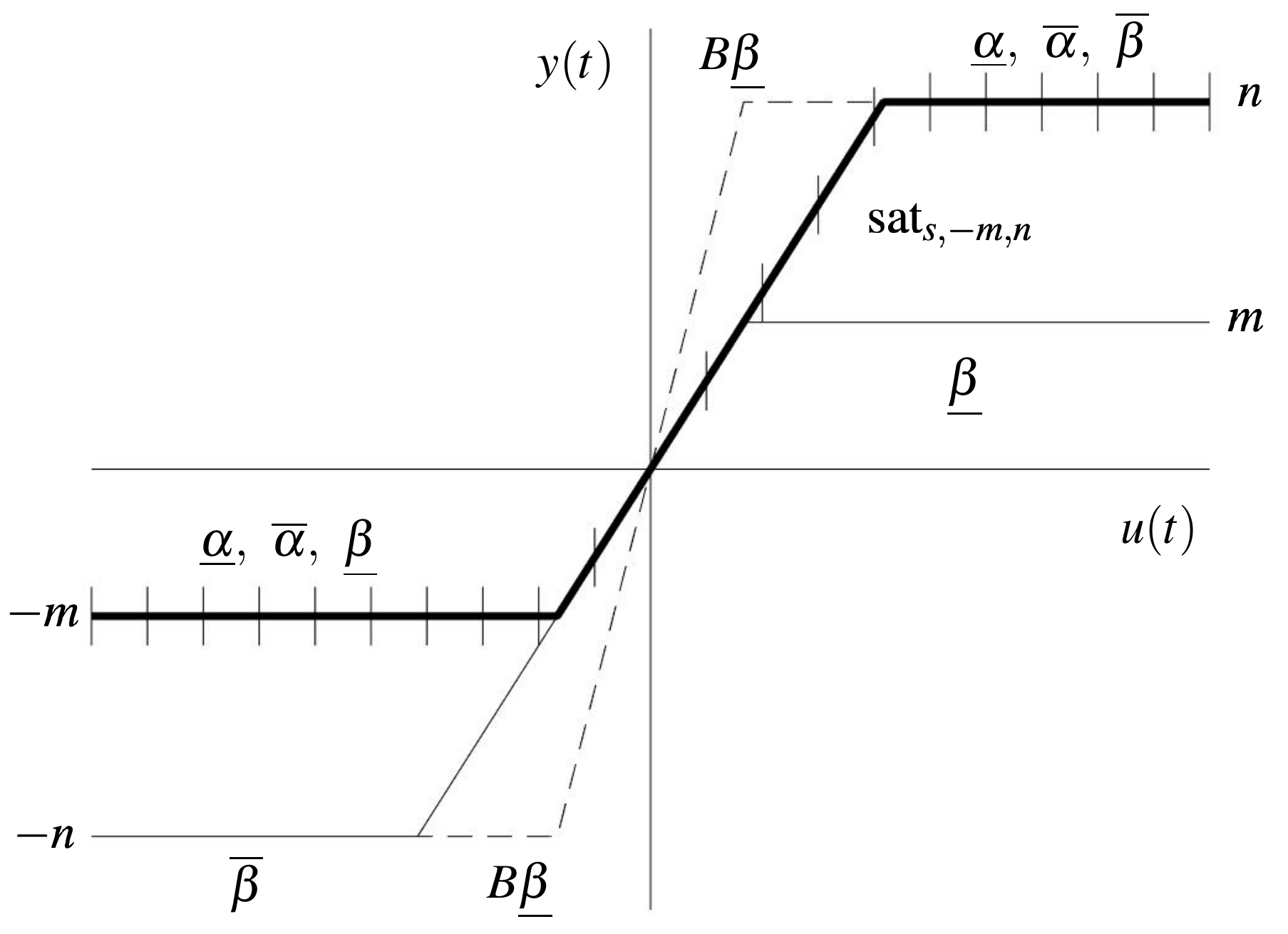}
		\caption{Asymmetric saturation. The nonlinearity is monotone so $\underline{\alpha}=\overline{\alpha}$ and $A=1$. Where $u(t)<0$ we have $\underline{\alpha}(u(t))=\overline{\alpha}(u(t))=\underline{\beta}(u(t))$ and where  $u(t)>0$ we have $\underline{\alpha}(u(t))=\overline{\alpha}(u(t))=\overline{\beta}(u(t))$. Both $\underline{\beta}$ and $\overline{\beta}$ are slope-restricted on $[0,s]$. The bound $\overline{\beta}$ is itself bounded above by $B\underline{\beta}$.
			}\label{fig_sat}
	\end{center}
\begin{center}
\includegraphics[width=\linewidth]{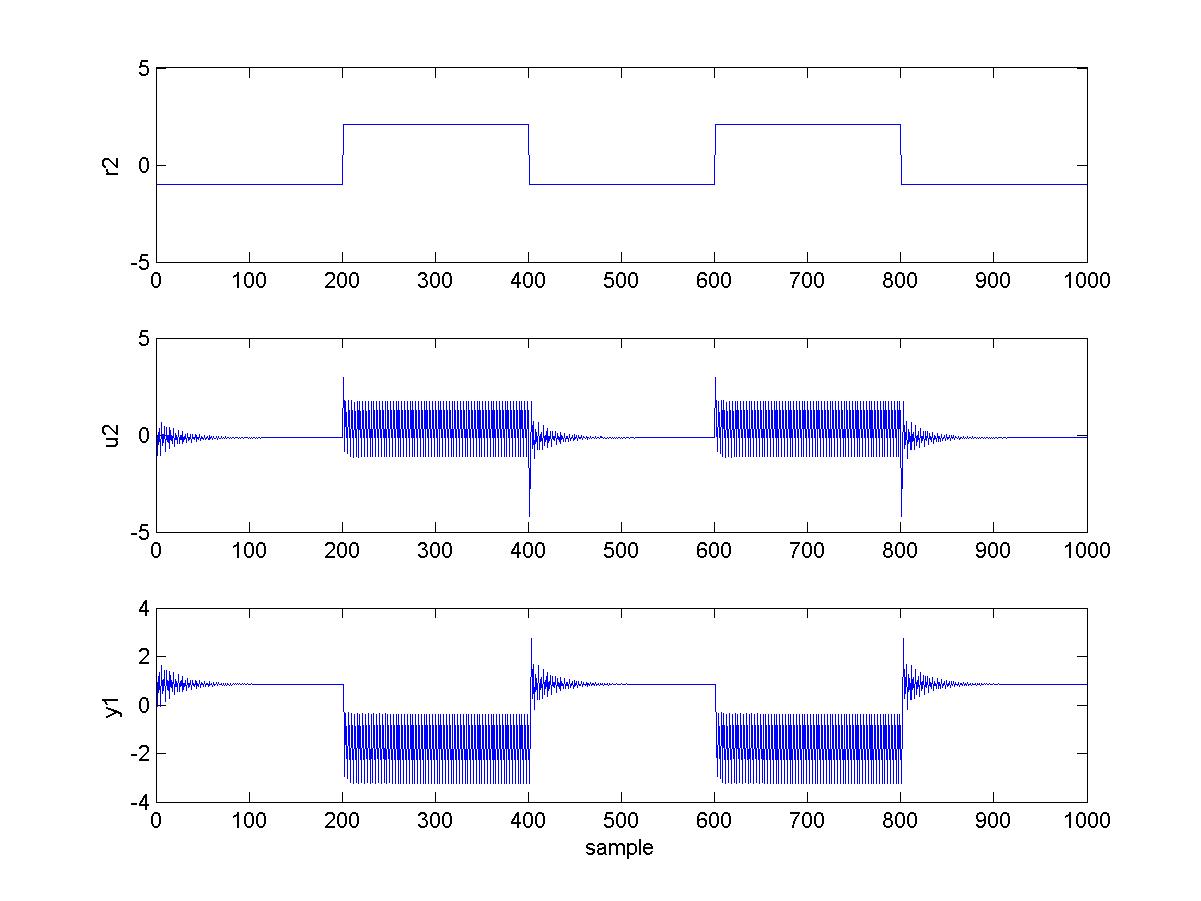}
\caption{Stability changes with size of exogenous signal. For low level, $r_2=-1$ giving steady state $-25/171$  for $u_2$. For high level, $r_2=2.1$ giving steady state $35/114$ for $u_2$. Since the saturation is $\pm 1$ the effective values of $B_k$ are $(1+25/171)/(1-25/171)=98/73\approx 1.3425$ and $(1+35/114)/(1-35/114)=149/79\approx 1.886$.}\label{fig_sim02}
\end{center}
\end{figure}

\subsection{Set-points and disturbances}


Suppose $\phi$ in the Lurye system (\ref{eq:Lurye}) is memoryless and characterised by the symmetric saturation nonlinearity  $\mbox{sat}_{1,-m,m}$ for some $m>0$. It may be of interest to analyse the behaviour when the exogenous signals $r_1$ and/or $r_2$ are step functions and non-zero in steady state. In particular, suppose the system is stable without saturation (i.e. when $\phi$ is replaced by a unit gain) and the signal $u_2$ tends to some $u_s\in\mathbb{R}$ in steady state with $|u_s|<m$. Under what circumstances can we guarantee $u_2$ tends to the same value when there is saturation? Our definition of input-output stability for the Lurye system requires $r_1,r_2\in\mathcal{L}_2$, so it cannot be applied directly in this case. 

But the question is equivalent to asking whether the system is stable when we renormalise our variables so that $r_1$ and $r_2$ are both in $\mathcal{L}_2$ and $u_2$ tends to zero when there is no saturation. In this case the saturation becomes $\mbox{sat}_{1,-m-u_s,m-u_s}$. Hence we can apply Corollary~\ref{cor6} to test for stability with
$B_k=(m+|u_s|)/(m-|u_s|)$.
Observe in particular that the value of $B_k$ is dependent on the magnitudes of the exogenous signals.

\if 0
Suppose further that the exogeneous signals $r_1$ and/or $r_2$ are fixed non-zero in steady state, and hence not in $\mathcal{L}_2$ (or $\ell_2$). Suppose further that if $\phi$ is replaced by the identity then $u_2(t)$ tends to some $u$ in steady state with $|u|<b$. If we renormalise our variables so that $u_2(t)$ tends to zero, then the saturation becomes $\mbox{sat}_{1,-b-u,b-u}$. Hence we can apply Corollary~\ref{cor6} to test for stability with
\begin{equation}
	R= \frac{b+|u|}{b-|u|}.
\end{equation}
We apply Corollary~\ref{cor6} but the condition (\ref{ineq_cor6}) varies with size of the exogenous signals in steady state.

\fi

\subsection{Specific example}

Here we illustrate the result for asymmetric saturation with a discrete-time example (see Appendix~\ref{sec:discrete}).
Consider the Lurye system (\ref{eq:Lurye}) where $\phi$ is characterised by the saturation function
\begin{equation}
	\alpha(u(t)) = \mbox{sat}_{1,-1,1}(u(t)).
\end{equation}
and let $G$ be the discrete-time transfer function
\begin{equation}
	G(z) = \frac{2z+0.92}{z(z-0.5)}.\label{G_ex2}
\end{equation}
By classical analysis \cite{Willems:68,Willems:71} this is only guaranteed stable when the exogenous signals are zero in steady state.


We find $M(1+G)$ is positive with  $M(z) = 0.596 z+ 1 + 0.022z^{-2} - 0.093z^{-3}$.  Corollary~\ref{cor6} implies  the loop is stable provided $B_k<1.467$. The multipliers were found using a convex search as discussed in Appendix~\ref{sec_search}. Corresponding steady-state values of exogenous signal $r_2$ and input to the saturation~$u_2$  are given in Table~\ref{table}.

 We know \cite{Heath14} there is a three-period limit cycle when $B_k=436/275\approx 1.586$. Fig~\ref{fig_sim02} shows the signals $r_2$, $u_2$ and $y_1$ when $r_2$ is switched between $-1$ and $2.1$ every $200$ samples. Corresponding values of $B_k$ are shown in Table~\ref{table}. The loop is guaranteed stable when $r_2=-1$, but shows a three-period limit cycle when $r_2=2.1$. 

\if 0
\begin{figure}[t]
\begin{center}
\includegraphics[width=\linewidth]{phase02new}
\caption{Phase of $1+G$ (red), with $G$ given by (\ref{G_ex2}),  $M = 0.596 z+ 1 + 0.022z^{-2} - 0.093z^{-3}$ (blue) and $M(1+G)$ (green). The phase of $M(1+G)$ lies between $-90^o$ and $+90^o$ at all frequencies.}\label{fig_phase}
\end{center}
\end{figure}
\fi


\begin{table}[t]
\begin{center}
\caption{Values of $B_k$ and corresponding steady state values for $u_2$ and $r_2$ in the example.}\label{table}
\begin{tabular}{|c|c|c|c|l|}
\hline
$B_k$ & $|u_2|$ & $|r_2|$ & Stable & Comment\\
\hline
1 & 0 & 0 & Yes & By classical analysis\\
1.343 & 0.146 & 1 & Yes & Low level in simulation\\
1.467 & 0.189 & 1.295 & Yes &  Corollary~\ref{cor5} \\
\hline
1.586 & 0.227 & 1.55 & No & Three-period limit cycle \cite{Heath14}\\
1.886 & 0.307 & 2.1 & No & High level in simulation \\
\hline
\end{tabular}

\end{center}
\end{table}

\section{Conclusion}
We have provided a generalisation of Zames-Falb muliplier theory for both quasi-monotone nonlinearities and quasi-odd nonlinearities. Both the classical results \cite{Zames68,desoer75} and the generalisations of \cite{Rantzer2001,Materassi11} can be stated as special cases. We have also provided the counterpart results for discrete-time systems in Appendix~\ref{sec:discrete}. The results follow classical multiplier analysis \cite{desoer75} but exploit the Jordan decomposition \cite{Billingsley95} of the impulse response $h=h_+-h_-$ of the operator $H$ where the multiplier is $M=1-H$.

Whereas the generalisations of \cite{Rantzer2001,Materassi11} are focused on non-monotone nonlinearities, we also consider nonlinearities that are monotone and quasi-odd. In this case we provide a result (Corollary~\ref{cor3}) that we illustrate via an  example with asymmetric saturation (Section~\ref{sec:ex02}). Our results may be applied to time-varying and multivalued nonlinearites and hence accommodate loop transformation. Unlike the classical results of  \cite{Zames68}, the set of available multipliers $M$ may be reduced after loop transformation; this is illustrated in the example of Section~\ref{sec:ex01}.

In Appendix~\ref{sec_search} we indicate how modifications of existing search algorithms can provide convex searches for the new class of multipliers. Such a search for discrete-time multipliers is used in the example of Section~\ref{sec:ex02} where multiplier theory can be used to test stability according to the magnitude of exogenous signals in steady state.

\appendices

\section{Discrete-time results}\label{sec:discrete}
%
%
%
%

The discrete-time counterparts of the Zames-Falb multipliers were proposed in \cite{Willems:68,Willems:71}. Applications of the discrete-time Zames–Falb multipliers range from input-constrained model predictive control \cite{Heath:07,Petsagkourakis:20} to first order numerical optimization 
algorithms \cite{Lessard:16,Michalowsky:20}. Although they are defined similarly to the continuous-time Zames-Falb multipliers, their properties are significantly different \cite{Wang:18,Zhang:20}.

 Here, for completeness, we state the discrete-time counterpart of Theorem~\ref{thm1}.  Define  $\mathbf{h_p}$ as the set of sequences in $\ell$  where $h_k\geq 0$ for all $k\in\mathbb{Z}$ and $h_0=0$.

\begin{theorem}[discrete-time, quasi-monotone or quasi-odd nonlinearity]\label{thm1d}

Under the discrete-time counterparts of the conditions of Assumptions~\ref{ass0} and~\ref{ass1}, let $H_+$ and $H_-$ be noncausal convolution operators whose respective impulse responses are $h_+\in\mathbf{h_p}$ and $h_-\in\mathbf{h_p}$ satisfying
	\begin{equation}
	A\|h_+\|_1+B\|h_-\|_1 < 1.\label{h_ineq01_d}
	\end{equation}
Let  $M=1-H_++H_-$. 
Then for any   $u\in\mathcal{\ell}_{2}$ 
\begin{equation}
	\sum_{k =-\infty}^\infty  (M u)_k (\phi  u )_k \geq 0.\label{sum_thm1d}
\end{equation}
Furthermore the  discrete-time Lurye system of Fig 1 is stable provided
\begin{equation}
		\mbox{Re}\left [
				M(e^{j\omega}) G(e^{j\omega})
				\right ] > 0\mbox{ for all }\omega\in[0,2\pi].\label{ineq_thm1d}
\end{equation}
\end{theorem}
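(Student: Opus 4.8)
The plan is to mirror the continuous-time development essentially verbatim, exploiting the fact that the discrete-time Zames--Falb machinery is structurally identical to the continuous-time one. First I would establish the discrete-time counterpart of Lemma~\ref{lem1}: under the discrete counterparts of Assumptions~\ref{ass1} and~\ref{ass1a}, for every $u\in\ell_2$ and every shift $\kappa\in\mathbb{Z}$,
\begin{equation}
-B\sum_{k}u_k y_k \leq \sum_{k}u_{k+\kappa}y_k \leq A\sum_{k}u_k y_k,\qquad y=\phi u.\nonumber
\end{equation}
The proof of this is the same two-pronged argument used for Lemma~\ref{lem1}: for the upper bound, decompose $u=u_+-u_-$ and $y=y_+-y_-$ into their positive and negative parts, drop the cross terms, apply the discrete bound $y_\pm \le \overline{\alpha}(u_\pm)\le A\,\underline{\alpha}(u_\pm)$ from the two assumptions, and invoke the discrete monotone shift inequality $\sum_k u_{k+\kappa}\,\underline{\alpha}(u_k)\le\sum_k u_k\,\underline{\alpha}(u_k)$ (the $\ell_2$ analogue of \cite{desoer75}, used for the classical discrete Zames--Falb theorem of \cite{Willems:68,Willems:71}); for the lower bound, bound $|u_{k+\kappa}y_k|\le B|u_{k+\kappa}|\,\underline{\beta}(|u_k|)$ using the odd bound $\overline{\beta}$ and Assumption~\ref{ass1a}, then apply the corresponding shift inequality for the odd monotone function $\underline{\beta}$ and use oddness to restore signs.

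With this lemma in hand, the positivity claim~(\ref{sum_thm1d}) follows exactly as in the proof of Theorem~\ref{thm1}. Writing $m_k=\delta_{k0}-(h_+)_k+(h_-)_k$ for the impulse response of $M$, expand
\begin{equation}
\sum_k (Mu)_k y_k = \sum_k u_k y_k - \sum_k (h_+ * u)_k y_k + \sum_k (h_- * u)_k y_k,\nonumber
\end{equation}
interchange the order of summation so each convolution term becomes a nonnegative combination of shifted inner products $\sum_k u_{k+\kappa}y_k$ weighted by $(h_+)_\kappa\ge0$ or $(h_-)_\kappa\ge0$, and apply the discrete Lemma to each: the $H_+$ term is bounded below by $-A\|h_+\|_1\sum_k u_ky_k$ and the $H_-$ term by $-B\|h_-\|_1\sum_k u_ky_k$, giving
\begin{equation}
\sum_k (Mu)_k y_k \ge \bigl(1-A\|h_+\|_1-B\|h_-\|_1\bigr)\sum_k u_k y_k \ge 0\nonumber
\end{equation}
under~(\ref{h_ineq01_d}), since $\sum_k u_ky_k\ge0$ by the sector bound in Assumption~\ref{ass1}.

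For the stability conclusion~(\ref{ineq_thm1d}) I would argue that $M$ lies in a subclass of the discrete-time Zames--Falb multipliers, so $M^{-1}$ exists as a bounded operator and $M$ admits the canonical factorization (as in \cite{Willems:68,Willems:71,Carrasco:12}); the frequency-domain positivity $\mathrm{Re}[M(e^{j\omega})G(e^{j\omega})]>0$ together with the established positivity from $x$ to $y$ (where $u=M^{-1}x$, exactly as in Fig~\ref{fig_3}) then yields $\ell_2$-stability by the standard discrete multiplier theorem \cite{desoer75,Willems:71}. The main obstacle is technical rather than conceptual: one must be careful that the shift inequalities~(\ref{ineq03})--(\ref{ineq04}) genuinely transfer to the discrete setting for $u\in\ell_2$ (rather than only for finitely supported sequences) and that the termwise interchange of the doubly-infinite sums is justified by absolute convergence, which follows from $u,y\in\ell_2$ together with $h_\pm\in\ell_1$ via Young's inequality. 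Everything else is a routine transcription of the continuous-time proof, which is why the author simply states ``for completeness'' and omits the details.
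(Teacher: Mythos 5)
Your proposal is correct and follows exactly the route the paper intends: its proof of Theorem~\ref{thm1d} is literally ``Similar to Theorem~\ref{thm1}'', i.e.\ transcribe Lemma~\ref{lem1} and the positivity/factorization argument to $\ell_2$, which is precisely what you do (including the correct use of the upper bound of the lemma for the $H_+$ term and the lower bound for the $H_-$ term). The only nit is that, as in the continuous case, you also need the discrete counterpart of Assumption~\ref{ass1a} (which defines $A$ and $B$); the theorem statement omits it, but you invoke it correctly.
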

\begin{proof}
Similar to Theorem~\ref{thm1}. 
\end{proof}

Discrete-time counterparts to Corollaries~\ref{cor3} and~\ref{cor4} follow straightforwardly as do counterparts to Theorem~\ref{thm2} and Corollaries~\ref{cor5} and~\ref{cor6}.

\section{Convex searches}\label{sec_search}

Our construction relies on the Jordan decomposition of the impulse response $h=h_+-h_-$ with $h_+(t)\geq 0$ and $h_-(t)\geq 0$ for all $t\in\mathbb{R}$ (continuous time) or $[h_+]_k\geq 0$ and $[h_-]_k\geq 0$ for all $k\in\mathbb{Z}$ (discrete time). It follows that any search method for Zames-Falb multipliers (or their discrete equivalents) that exploits the characterisation of the impulse response $h$ as the sum of basis functions  can be easily modified to search for the multipliers of this paper. This is the case with Chen and Wen's LMI search \cite{Chen:95} and the convex FIR search for discrete-time multipliers reported in \cite{Wang:TAC}.

Specifically, suppose a search algorithm constructs an impulse response $h$ as
\begin{equation}
h = \sum_{i=1}^N \lambda_i h_i,
\end{equation}
where each $h_i\in\mathbf{H_p}$  satisfies $h_i(0)=0$, $h_i(t)\geq 0$ for all $t\in\mathbb{R}$ and $\|h_i\|_H =1$ (continuous time) or 
where each $h_i\in\mathbf{h_p}$  satisfies $[h_i]_0=0$, $[h_i]_k\geq 0$ for all $k\in\mathbb{Z}$ and $\|h_i\|_1=1$ (discrete time). Then multipliers for monotone and bounded nonlinearities can be parameterised with the convex constraints
\begin{equation}
\lambda_i\geq 0 \mbox{ for }i=1,\ldots,N\mbox{ and } \sum_{i=1}^N\lambda_i < 1.
\end{equation}
Similarly multipliers for monotone, bounded and odd nonlinearities can be parameterised with the convex constraint
\begin{equation}
 \sum_{i=1}^N|\lambda_i| < 1.
\end{equation}

These can be modified to construct an impulse response as $h=h_+-h_-$ with
\begin{equation}
h_+ = \sum_{i=1}^N \lambda_{i+} h_i\mbox{ and } h_- = \sum_{i=1}^N \lambda_{i-} h_i,
\end{equation}
with each $h_{i}$ defined as before. The appropriate convex constraints are then
\begin{equation}
\lambda_{i+}\geq 0
\mbox{ and }
 \lambda_{i-}\geq 0\mbox{ for }i=1,\ldots,N,
\end{equation}
and
\begin{equation}
A \sum_{i=1}^N\lambda_{i+} + B\sum_{i=1}^N\lambda_{i-} < 1.
\end{equation}

In particular, both the continuous-time search of \cite{Chen:95} and the discrete-time search of \cite{Wang:TAC} may be modified in this way to give LMI-based convex searches. We use such a modified discrete-time search in the  example of Section~\ref{sec:ex02}.



\section*{Dedication}
We dedicate this paper to our late collaborator and co-author Dmitry Altshuller. Had he lived this paper would surely have had a different flavour. We have preserved his spelling of Lurye throughout. But he would have prefered the development in terms of delay integral quadratic contsraints \cite{Yakubovich:02,Altshuller:04,Altshuller:11,Altshuller:13}; although such development is straightforward, we have not resolved some minor technical details, and prefer to retain the classical analysis with which we are more comfortable. In addition, Dmitry proposed the development in the more elegant framework of Fourier analysis on locally Abelian compact groups \cite{Rudin:62,Freedman:69}; for the time-being this will have to remain as an exercise for the reader. We miss working with Dmitry.
\begin{center}
WPH and JC.
\end{center}

\bibliographystyle{IEEEtran}

\bibliography{asym_bib_trans}


%




\ifCLASSOPTIONcaptionsoff
  \newpage
\fi

\if 0
\begin{IEEEbiography}[{\includegraphics[width=1in,height=1.25in,clip,keepaspectratio]{DSC_2535b-2}}]{William P. Heath}
	received an M.A. in mathematics from the University of Cambridge, U.K. and both an M.Sc. and Ph.D. in systems and control from the University of Manchester Institute of Science and Technology, U.K.
	He is Chair of Feedback and Control with the Control Systems Centre and Head of the Department of Electrical and Electronic Engineering, University of Manchester, U.K.  Prior to joining the University of Manchester, Professor Heath worked at Lucas Automotive and was a Research Academic at the University of Newcastle, Australia. His research interests include absolute stability, multiplier theory, constrained control, and system identification.
\end{IEEEbiography}

\begin{IEEEbiography}[{\includegraphics[width=1in,height=1.25in,clip,keepaspectratio]{joaquin_3}}]{Joaquin Carrasco}
	is a Senior Lecturer at the Control Systems Centre, Department of Electrical and Electronic Engineering, University of Manchester, UK. He was born in Abarán, Spain, in 1978. He received the B.Sc. degree in physics and the Ph.D. degree in control engineering from the University of Murcia, Murcia, Spain, in 2004 and 2009, respectively. From 2009 to 2010, he was with the Institute of Measurement and Automatic Control, Leibniz Universität Hannover, Hannover, Germany. From 2010 to 2011, he was a research associate at the Control Systems Centre, School of Electrical and Electronic Engineering, University of Manchester, UK. His current research interests include absolute stability, multiplier theory, and robotics applications. 
\end{IEEEbiography}

\fi

\begin{IEEEbiography}[{\includegraphics[width=1in,height=1.25in,clip,keepaspectratio]{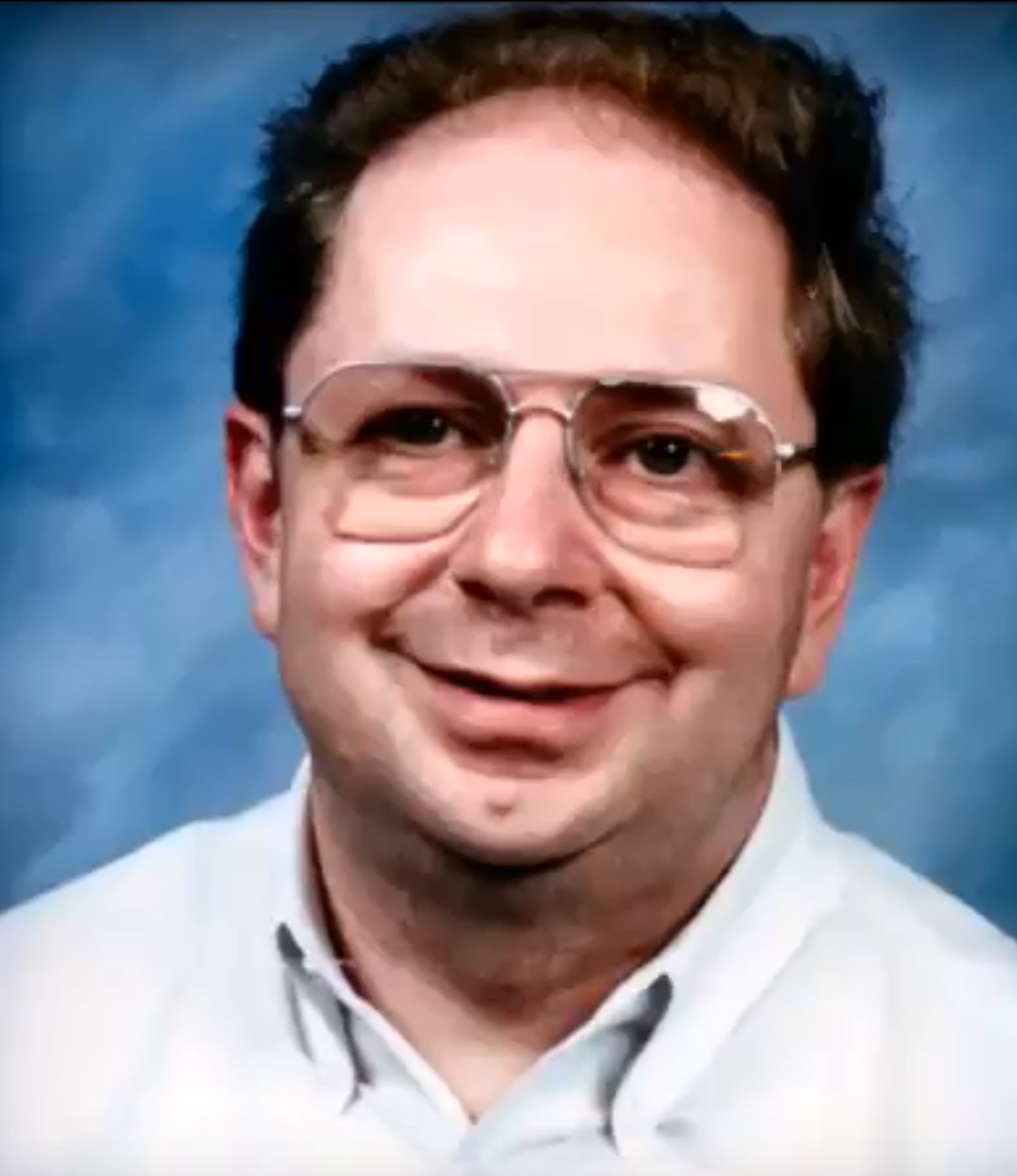}}]{Dmitry Alexander Altshuller} was born on January 16, 1961. He graduated from high school in Leningrad (now St Petersburg) with Spanish as a foreign language. He emigrated with his family to the USA in 1979.

He received the B.Math. degree from the University of Minnesota, Minneapolis, in 1982, the M.S. degree in Control Systems Engineering from Washington University, St. Louis, MO, in 1995, and the Ph.D. (Kandidat Nauk) degree in Physics and Mathematics (with specialty in Theoretical Cybernetics) from St. Petersburg State University, St. Petersburg, Russia, in 2004.  His supervisor was Vladimir Yakubovich who had been a childhood friend of his father. His thesis was entitled  “Absolute Stability of Control Systems with Nonstationary Nonlinearities”. He published an extended version under the title “Frequency Domain Criteria for Absolute Stability” with Springer in 2013.

He was a Member of Technical Staff at Lucent Technologies, worked as a Control Systems Engineer for MH Systems, Inc and as a Staff Mathematician for Scientific Applications and Research Associates, Inc. Latterly he worked with Crane Aerospace, Parker Aerospace  and Dassault Systems.
 
He was an author or  co-author of over 30 published and/or presented papers, including a partial solution of one the problems described in the renowned book Unsolved Problems in Mathematical Systems and Control Theory (Blondel and Megretski). His research (done mostly on his own time) involved various aspects of nonlinear control systems, including stability and optimal control. He was a Senior Member of the IEEE and served as Chair of the Orange County Aerospace and Electronic Systems Society Chapter 2012-2013. He was a member of the Society for Industrial and Applied Mathematics and of the International Physics and Control Society (IPACS). He served on program committees for several conferences.

He suffered a heart attack while on vacation  in Key Largo, FL and passed away May 26, 2017, one day after celebrating his 21st wedding anniversary to Mary Altshuller. He loved scuba diving and played some chess. He also earned a sport pilot's license and was planning on becoming instrument-rated at some point.

He is survived by his wife Mary, his parents Drs. Mark and Elena Altshuller and a daughter and stepson.
\end{IEEEbiography}

\end{document}